\let\@wraptoccontribs\wraptoccontribs
\newcommand{\club}{\clubsuit} % THIS IS TO MARK COMMENTS, CHANGES
\newtheorem{theorem}{Theorem}[section]
\newtheorem{prop}[theorem]{Proposition}
\newtheorem{proposition}[theorem]{Proposition}
\newtheorem{lemma}[theorem]{Lemma}
\newtheorem{cor}[theorem]{Corollary}
\newtheorem{remark}[theorem]{Remark}
\newcommand{\dv}{{\rm dv}}
\newcommand{\vol}{{\rm vol}}
\newcommand{\EE}{{\mathcal{E}}}
\newcommand{\R}{{\mathbb{R}}}
\newcommand{\Diff}{{\rm Diff}}
\newcommand{\Prob}{{\rm{Prob}}}
\DeclareMathOperator{\Lie}{Lie}
\DeclareMathOperator{\Hom}{Hom}
\DeclareMathOperator{\Tr}{Tr}
\newcommand{\Sym}{\mathrm{Sym}}
\newcommand{\Pos}{\mathrm{Pos}}
\newcommand\norm[1]{\left\Vert {#1} \right\Vert}
\newcommand\isom{\simeq}
\DeclareMathOperator{\End}{End}
\DeclareMathOperator{\Met}{Met}
\newcommand\Metv{\Met_{\dv}}
\newcommand{\midmid}{\, \middle| \,}
\newcommand{\GL}{\mathrm{GL}}
\newcommand{\SL}{\mathrm{SL}}
\newcommand{\lsl}{\mathfrak{sl}}
\newcommand{\lgl}{\mathfrak{gl}}
\newcommand{\so}{\mathfrak{so}}
\newcommand{\gO}{\mathrm{O}}
\newcommand{\SO}{\mathrm{SO}}
\newcommand{\liea}{\mathfrak{a}}
\newcommand{\lieg}{\mathfrak{g}}
\newcommand{\FM}{\mathcal{F}(M)}
\DeclareMathOperator{\diag}{diag}
\DeclareMathOperator{\diam}{diam}
\DeclareMathOperator{\Stab}{Stab}
\newcommand\uxi{\underline{\xi}}
\newcommand\uet{\underline{\eta}}
\begin{document}

\title{Gaussian measures on the of space of Riemannian metrics}

\contrib[With an Appendix by]{Yaiza {Canzani*}, D. Jakobson and L. Silberman}

\author[B. Clarke]{Brian Clarke}
\address{Mathematisches Institut, 
Universit\"at M\"unster,  Germany}
\email{brian.clarke@uni-muenster.de}

\author[D. Jakobson]{Dmitry Jakobson}
\address{Department of Mathematics and Statistics, McGill University,
Montr\'eal, Canada.}
\email{jakobson@math.mcgill.ca}

\author[N. Kamran]{Niky Kamran}
\address{Department of Mathematics and Statistics, McGill University,
Montr\'eal, Canada.}
\email{nkamran@math.mcgill.ca}

\author[L. Silberman]{Lior Silberman}
\address{University of British Columbia, Department of Mathematics, Vancouver, Canada}
\email{lior@math.ubc.ca}

\author[J. Taylor]{Jonathan Taylor}
\address{Department of Statistics, Stanford University, Stanford, CA
94305} \email{jonathan.taylor@stanford.edu}

\date{\today}

\thanks{Y.C.\ was supported by Schulich Fellowship of McGill
  University (Canada).  B.C.~was supported by NSF grant DMS-0902674.
D.J.\ was supported by NSERC and FQRNT grants and Dawson Fellowship of McGill 
University (Canada). N.K. was supported by NSERC, FQRNT and James McGill Fellowship of McGill university. 
L.S. was supported by NSERC. \\
\indent   *YC: Department of Mathematics, Harvard University, USA. 
\emph{E-mail address}: canzani@math.harvard.edu}  

\keywords{Manifolds of metrics, $L^2$ distance, Gaussian measures, diameter, eigenvalue, parallelizable,  
frame bundle, Cartan decomposition, symmetric space}

\subjclass[2010]{57R25, 58B20, 58D17, 58D20, 58J50, 60G60}

\begin{abstract}
We introduce Gaussian-type measures on the manifold of all metrics
with a fixed volume form on a compact Riemannian manifold of
dimension $\geq 3$.  For this random model we compute the characteristic
function for the $L^2$ (Ebin) distance to the reference metric.
In the Appendix, we study Lipschitz-type distance between Riemannian metrics,
and give applications to the diameter, eigenvalue and volume entropy
functionals.  \\
Nous introduisons des mesures de type gaussien sur la vari\'et\'e des m\'etriques riemanniennes a forme volume fix\'ee definies sur une vari\'et\'e compacte de dimension $\geq 3$. Pour ce mod\`ele al\'eatoire, nous calculons la fonction caract\'eristique de la distance $L^2$ (Ebin) \`a une m\'etrique de r\'ef\'erence donn\'ee. Dans un appendice, nous \'etudions une distance de type Lipschitz entre m\'etriques riemanniennes, et donnons des applications aux fonctionnelles associ\'ees ai diam\`etre, aux valeirs propres du laplacien et \`a l'entropie de volume.
\end{abstract}
\maketitle

\section{Introduction}

In this paper we construct Gaussian-type measures on the space of
Riemannian metrics on a fixed manifold and make some elementary observations
about them, leaving deeper results for further work.  We will being with
several motivations for our construction and with directions for further
work.

Let $(M,g)$ be a Riemannian manifold.  \emph{Quantum Chaos} is a general
term for the study of connections between the dynamics of the associated
geodesic flow on $TM$ (corresponding to the physics of a classical particle
moving freely on $M$) and the spectrum of the Laplace--Beltrami operator
on $L^2(M)$ (corresponding to the physics of a quantum particle moving
freely on $M$).  We note the conjectures of Bohigas, Giannoni and Schmidt
\cite{BGS} about asymptotic behaviour of level spacings between
Laplace eigenvalues for classically chaotic systems, and M. Berry's
random wave conjectures \cite{Berry} about asymptotic properties of
eigenfunctions.  These conjectures appear to be very difficult
to prove using standard semiclassical methods, and a natural idea is
instead to consider them \emph{on average} in some sense in the space of
metrics on $M$, or perhaps to use random methods to construct examples
or counter examples.

Another motivation for our construction is of developing geometric analysis
on (often infinite-dimensional) manifolds of metrics. 
Most important progress to date involved \emph{differentiation} on
manifolds of metrics, in particular the study of $L^2$ distance between
metrics and related questions \cite{Ebin,FG,Clarke}.  The next natural step
is to define \emph{integration} on manifolds of metrics, 
hence the need to define and study measures on those manifolds.
Related questions have been considered in \cite{Morgan}
(for manifolds of maps) and in \cite{CJW}.  

We now turn to our construction.  In the predecessor work \cite{CJW},
the authors took a fixed ``reference'' (or ``background'') metric $g^0$
on $M$, and then considered a random metric $g = e^{2\varphi}g^0$ in the
conformal class of $g^0$, where the (logarithm of) the conformal
factor $\varphi$ varied as a Gaussian random field on $M$, constructed
using the eigenfunctions of the Laplacian for the reference metric.
In the present paper, we work in a transverse direction: given the
reference metric $g^0$ we choose a random deformation $g$ among those
metrics having the same volume form as $g^0$.
Again we parametrize those metrics by exponentiating a Gaussian random
field on $M$.  Beyond describing the construction we limit out study to
the statistics of various distance functions on the space of metrics,
leaving deeper investigation for later papers.

\begin{remark}
It is possible to combine the two constructions, adding a conformal factor
to our deformation.  We mainly avoid doing this since the (completion of the)
space of all Riemannian metrics is singluar, unlike the case of a fixed volume
form.
\end{remark}

\begin{remark}
Our construction depends on a choice of a global orthonormal frame
in the tangent bundle (a global section of the frame bundle).
The existence of such a frame is known as \emph{parallelizability},
and is a topological property of $M$.  We do not believe this assumption
is essential; rather it simplifies the presentation here.  For example,
if one patches together deformations on parts of the manifold using
a partition of unity, the distance statistics would not be as nice.

With this choice in hand, our construction is equivariant under the
diffeomorphism group of the manifold: the pushforward of our probability
measure by a diffeomorphism is equal to the measure obtained by
pushing forward the reference metric and the frame.
\end{remark}

We give the construction in Section \ref{sec:Gaussian}.  It is based on
viewing the space of metrics with a given volume form as the space of
sections of a bundle over $M$ with fibers diffeomophic
to the symmetric space $S = \SL_n(\R)/\SO(n)$ ($n=\dim M$).
This symmetric space supports an invariant Riemannian metric
which can then be used to define an $L^2$ distance on the space of metrics,
which coincides with the distance arising from a Riemannian structure on this
(infinite-dimensional) space.  This distance is introduced in
Section \ref{sub:L2-distance} and is studied as a random variable in
Section \ref{sec:fixvolume:om2}, where tail estimates
are obtained in terms of geometric constants.

In the Appendix, similar computations are carried out for a Lispchitz-type
distance, also considered in \cite{BU}.  Those estimates are then 
applied to establish integrability and existence of exponential moments for 
the \emph{diameter}, \emph{Laplace eigenvalue} and \emph{volume entropy}
functionals of our random Riemannian metrics.

Initial directions for further work involves studying the nature of the
deformation we obtain (computation of the probability of the metric to lie
in a small ball around the reference metric, and the behaviour of the
isoperimetic constant under the deformation).  In a foundational direction,
we will address in a sequel questions about \emph{convergence}
and {\em tightness} (i.e.\ relative compactness in
the weak-* topology) of our families of measures.  

We expect that the Gaussian measure we have introduced in this paper
will have applications that extend significantly beyond the basic
questions considered here, in particular to the motivating problems
discussed above.

\subsection*{Acknowledgements}
The authors would like to thank R.~Adler, R.~McCann, A.~Nabutovsky,
I.~Polterovich, P~ Sarnak, N.~Sidorova, P.~Sosoe, J.~Toth, S.~Weinberger,
I.~Wigman and S.~Zelditch for stimulating discussions.  Specific thanks to
L.~Hillairet for finding an error in a previous version of the appendix.
The authors would like to thank for their hospitality the organizers of the
following conferences, where part of this research was conducted:
Spectral Geometry Workshop at Dartmouth (July 2010); Workshop on Manifolds of
Metrics and Probabilistic Methods in Geometry and Analysis at CRM,
Montreal (July 2012); Workshop on Metric Geometry, Geometric Topology
and Groups at BIRS (August 2013); Workshop on Infinite-Dimensional Geometry at 
Berkeley (December 2013).  In addition, D.J. would like to acknowledge
the hospitality of the Departments of Mathematics at Stanford University and
at UBC, where this research was partly conducted.

%%%%%%%%

\section{The space of metrics}\label{sec:space-of-metrics}

We fix once and for all a compact smooth manifold $M$ without boundary and
write $n$ for its dimension.  We also fix a smooth volume form $\dv$ on $M$.

We rely crucially on the symmetric space structure of the space
$P$ of positive-definite matrices of determinant $1$ and on the related
structure theory of $\SL_n(\R)$.  In the discussion below we state the
facts we use; proofs and further details may be found in the text
\cite{Terras}, which concentrates on this case, and in \cite{Helgason} which
develops the general theory of symmetric spaces associated to semisimple
Lie groups.

\subsection{The space of metrics}\label{sub:space-metrics}

We start by giving a coordinate-free description of the set of Riemannian
metrics with the volume form $\dv$ on $M$.  We then restrict to a class of
manifolds for which there is a coordinate system simplifying the description.

\begin{comment}
The first description will be in terms of reductions of the frame bundle
$\FM$, viewed as a principal $\GL_n(\R)$-bundle over $M$.
Recall \cite{Sternberg} that for given subgroup
$G \subset \GL_n(\R)$, a $G$-structure $B_{G}$ on $\FM$ is a reduction of
the bundle to that group, that is a $G$-invariant submanifold of
$\FM$ surjecting on $M$ such that for all $p \in B_{G}$ and
$g\in \GL_n(\R)$, we have $g \cdot p\in B_{G}$ if and only if $g\in G$.
In this language the set of all Riemannian metrics on $M$ is the set of
$\gO(n)$-structures on $\FM$.  Now let $B_{G}$ denote
the $\gO(n)$-structure on $M$ associated to $g_{0}$ and let $\SL_n(\R)$
act pointwise on the fibers of $\pi:B_{G}\to M$.  Note that the submanifold
of $\FM$ obtained by this construction corresponds to the
set of orthonormal frames for
the Riemannian metrics $g$ on $M$ having the same volume form as
$g_{0}$. 

We shall now give a second description which forms our main point of
view.
\end{comment}

Let $V$ be a finite-dimensional real vector space with dual space $V^*$,
and let $\Sym (V) = \left\{ g\in\Hom(V,V^*) \vert g^* = g\right\}$ be
the space of symmetric bilinear forms on $V$.  Among those we distinguish
$\Pos (V) = \left\{g\in \Sym (V) \vert \forall v\in
  V:g(v,v) > 0\right\}$, the space of positive-definite bilinear
forms on $V$.  Let $\SL(V)\subset \GL(V)$ denote the special
and general linear groups on $V$, and $\lsl (V)\subset \lgl
(V)$ their Lie algebras.  Then $\GL(V)$ acts on $\Pos(V)$ by
\begin{equation}\label{eq:cong-action}
h^{-1} \cdot g = h^*\circ g \circ h\,.
\end{equation}
It is well-known that this action is transitive;
the stabilizer of any $h\in\Pos(V)$ is a maximal compact subgroup
isomorphic to $\gO(n)$.  Moreover, the orbits of $\SL(V)$ are
precisely the level sets of the determinant function $g\mapsto
\det(g_0^{-1} g)$ where $g_0$ is a fixed isomorphism $V\to V^*$.  Each
level set is then of the form $\SL(V)/K_{g_0}$ where $K_{g_0} =
\Stab_{\SL(V)}(g_0) \isom \SO(n)$ and we give it the
$\SL(V)$-invariant Riemannian structure coming from the Killing form
of $\SL(V)$, making it into a simply connected Riemannian manifold of
non-positive curvature.

\begin{remark} Since $\Pos(V)$ is an open subset of the vector space $\Sym(V)$,
we may trivialize its tangent bundle by identifying each tangent space with
$\Sym(V)$.  The reader may then verify that with this identification, the
tangent space at $g$ to the $\SL(V)$-orbit of of $g$ is exactly
$\{ X \in \Sym(V) \vert \Tr(g^{-1}X) = 0 \}$.  Here we compose the linear maps
$X\in\Hom(V,V^*)$ and $g^{-1} \in\Hom(V^*,V)$ to obtain a map in $\End(V)$
which has a trace.  The reader may also verify that, since the congruence
action above is linear as an action on $\Sym(V)$, the derivative of
the action of $h^{-1}$ at $g$ is the map $X\mapsto h^{-1}Xh$ (composition
of linear maps).

Now the Riemannian structure on the orbit claimed above is
\begin{equation}\label{eq:symmspace}
\rho_g(X,X) = \Tr\left(g^{-1}Xg^{-1}X\right)\,,
\end{equation}
and it is an immediate
calculation that this is $\SL(V)$-equivariant: that
$\rho_{h.g}(h.X,h.X) = \rho_g(X,X)$, in other words that the metric
is $\SL(V)$-invariant.
\end{remark}

With the usual translation of notions from vector spaces to vector bundles,
we associate to the tangent bundle $TM$ the vector bundles $\Hom(TM,T^*M)$
and $\Sym(TM)$, the symmetric space-valued bundle $\Pos(TM)$, and the
group bundles $\GL(TM)$ and $\SL(TM)$.

By definition, a \emph{Riemannian metric} on $M$ is a smooth section of
$\Pos(M)$; we denote the space of sections by $\Met(M)$.  To such a
metric there is an associated Riemannian volume form, and we let
$\Metv(M)$ denote the space of metrics whose volume form is $\dv$.
Fixing a metric $g_0 \in \Metv(M)$, the above discussion identifies
$\Metv(M)$ with the space of sections of the bundle over $M$ whose fibers
are isomorphic to $\SL_n(\R)/\SO(n)$.  Moreover, the fibre at $x$ of
this bundle is equipped with a transitive isometric action of $\SL(T_x
M)$, where the metric is the one pulled back from the identification
with $S = \SL_n(\R)/\SO(n)$ (the pullback is well-defined since the
metric on $S$ is $\SL_n(\R)$-invariant).

\begin{remark}
It is a classical result of Ebin \cite{Ebin} that the diffeomorphism group
acts transitively on the space of smooth volume forms of total volume $1$,
and therefore that the foliation of $\Met(M)$ by the orbits of the
diffeomorphism group $\Diff(M)$ descends to a foliation of $\Metv(M)$ by
the group $\Diff_\dv(M)$ of volume-preserving diffeomorphisms.  It follows that
$\Met(M)/\Diff(M) \isom \Metv(M)/\Diff_\dv(M)$; we regard this space
as the \emph{space of geometries} on $M$.
\end{remark}

In local co-ordinates $(x^1,\ldots,x^n)$, the above construction reads
as follows. One takes the basis
$\left\{\frac{\partial}{\partial x^i}\right\}_{i=1}^{n}$ for $T_x M$
and its dual basis $\left\{dx^i\right\}_{i=1}^{n}$ for $T_x^* M$.
Then fibers of $\Sym(M)$ are represented by symmetric matrices,
fibers of of $\Pos(M)$ by positive-definite symmetric matrices.
The volume form associated to $g\in\Met(M)$ is then given by
$\left|\det(g_x)\right|^{1/2} dx^1 \wedge\cdots\wedge dx^n$.
$\Metv(M)$ is then the metrics $g$ such that $\det(g_x) = \det(g^0_x)$
for all $x\in M$, where $g^0$ is any metric with Riemannian volume
form $\dv$.
The group $\GL_n(\R)$ then acts on the fibres via congruence transformations
$h^{-1} \cdot g = h^t g h$, with the
stabilizer of $g_x$ being the orthogonal group $\gO_{g_x}(\R)\isom\gO(n)$.
Similarly, the group $\SL_n(\R)$ acts transitively on the subset of the
fibre with a given determinant, with point stabilizer
$\SO_{g_x}(\R) \isom \SO(n)$.

\subsection{Deforming a metric}
Fix $g^0 \in \Metv(M)$, and for $x\in M$ let $K_x \subset G_x = \SL(T_x M)$
be the orthogonal group of the positive-definite quadartic form $g^0_x$,
which is also the stabilizer of $g^0_x$ under the congruence action
\eqref{eq:cong-action}.  Fix a frame $f_x$ in $T_x M$, orthonormal
with respect to the inner product defined by $g^0_x$, and let
$A_x \subset G_x$ be the subgroup of matrices which are diagonal with positive
entries in the basis $f_x$.  As noted above we can identify the set of
positive-definite quadratic forms on $T_x M$ with the same determinant as
$g^0$ with the symmetric space $G_x / K_x$.

\begin{remark} We warn the reader that we use the usual letter $G$ to
denote a semisimple Lie group and the letter $g$ to denote a Riemannian metric.
As such we don't have $g_x \in G_x$, and rather use $h_x$ to denote an
arbitrary element of $G_x$.
\end{remark}

Recall now the \emph{Cartan} decomposition
\begin{equation}\label{eq:Cartan}
G_x = K_x A_x K_x
\end{equation}
(see for example \cite{Terras}).  This states that every
$h_x \in G_x$ can be written in the form
$h_x = k_{1,x} a_x k_{2,x}$ with $k_{i,x}\in K_x$ and $a_x\in A_x$,
with $a_x$ being unique up to the action of the \emph{Weyl group}
$N_{G_x}(A_x) / Z_{G_x}(A_x)$, a group isomorphic to $S_n$ acting by
permutation of the coordinates with respect to the basis $f_x$.  Given
$a_x$, the two elements $k_{i,x}\in K_x$ are unique up to the fact that
$Z_{K_x}(a_x)$ may not be trivial (generically this centralizer is equal
to $Z_{K_x}(A_x)$, which is either trivial or $\{ \pm 1 \}$ depending on
whether $n$ is odd or even).

Recalling that $k_{2,x}\in K_x$ stabilizes $g^0_x$, it follows
that for $h_x\in G_x$ decomposed as above we have
$$ h_x \cdot g^0_x = (k_{1,x} a_x)\cdot g^0_x.$$
Since $G_x$ acts transitively on the level set, it follows that every
$g^1_x$ with the same determinant $g^0_x$ is of this form, and moreover that
in that form the $a_x$ is unique up to the action of $S_n$ on $A_x$.

Our goal is to randomly deform $g^0$ by choosing elements $k_x$ and $a_x$ 
for every $x\in M$.  We shall discuss the ``random'' aspect of the construction
in the next section, and concentrate at the moment on the topological
issues involved in making such constructions well-defined.

Given the orthonormal frame $f_x$, we can identify $A_x$ with the space
of positive diagonal matrices of determinant $1$.  Further, using the
exponential map we may identify this group with its Lie algebra
$\liea \isom \R^{n-1}$ of diagonal matrices of trace zero.
We will therefore specify $a_x$ by choosing such a matrix at each $x$,
that is by choosing a function $H\colon M\to \liea$.

While this clearly works locally, making a global
identification requires a choice of frame $f_x$ at
every $x\in M$, that is an everywhere non-zero section of the frame bundle
of $M$ or equivalently a trivialization of the tangent bundle of $M$,
something which is not possible in general.
For simplicity we have decided to only discuss here the
case of manifolds where such sections exist, and defer more general
constructions to future papers.

\begin{remark}
We required the existence of a smooth $g^0$-orthonormal frame.  However,
this is equivalent to the topological condition (``parallelizability'')
of the existence of a smooth but not necessarily orthonormal frame.
To see this note that starting with any non-zero smooth section of the frame
bundle, applying pointwise the Gram--Schmidt procedure with
respect to the metric $g^0$ is a smooth operation and
will produce a smooth orthonormal frame.
\end{remark}

We survey here some facts about parallelizable manifolds, mainly to note
that this class is rich enough to make our construction interesting.
First, a parallelizable manifold is clearly orientable. Second, 
a necessary condition for parallelizability is the vanishing of the second
Stiefel--Whitney class of the tangent bundle, which for orientable manifolds
is equivalent to $M$ being a \emph{spin manifold}.
Examples of parallelizable manifolds include all $3$-manifolds,
all Lie groups, the frame bundle of any manifold and 
the spheres $S^n$ with $n\in\{1,3,7\}$.

\subsection{The $L^2$ metric}\label{sub:L2-distance}
Once the volume form is fixed, the action of $\SL(T_x M)$ on the stalk
of $\Metv(M)$ at $x$ identifies it with the symmetric space
$S = \SL_n(\R)/\SO(n)$.  As noted above this space supports an
$\SL_n(\R)$-invariant Riemannian metric of non-positive curvature.
Denote its distance function $d_S$; we then write $d_x$ for the
well-defined metric on the stalk at $x$ of $\Metv(M)$.
Integrating this over $M$ then gives a metric (to be denoted $\Omega_2$)
on $\Metv(M)$: given two Riemannian metrics $g^0, g^1 \in \Metv(M)$ on $M$
with the same Riemannian volume form $\dv$, we set

$$\Omega_2^2(g^0, g^1) = \int_M d_x^2(g^0_x, g^1_x) \dv(x).$$

For a different point of view on this metric, recall that $d_S$ is the
distance function associated to the Riemannian metric \eqref{eq:symmspace}.
Fixing $V=\R^n$ with its standard metric and frame, we write $G=\SL_n(\R)$,
$K = \SO(n)$ so that $S=G/K$.
In this setting one can find directly the geodesics connecting
the standard metric to any metric which is diagonal in the standard basis.
Using $G$-equivariance and the Cartan decomposition \eqref{eq:Cartan},
the upshot is the following (for details see \cite{Terras}):
let $hK, h'K \in S=G/K$ correspond to two metrics of equal
determinant.  Then $K h^{-1} h' K$ is a well-defined element of
$K\backslash G / K \isom A/S_n$, where $A$ is the group of diagonal matrices
of determinant $1$ and positive entries.  Let $a\in A$ be a representative
for $K h^{-1} h' K$.  We then say that $g$ and $h$ are in
\emph{relative position} $a$.  Writing $\log a$ for the vector of $n$
logarithms of the diagonal entries of $a$ (note that the entries of $\log a$
sum to $1$, since $\det a=1$), 
it turns out that 
and $d_S(hK, hK) = \norm{\log a}$, where
and $\norm{\cdot}$ is the usual $\ell^2$ norm.

%%%%%%%%%%%%%%

\section{Gaussian measures on the space of metrics}\label{sec:Gaussian}

We next turn to the question of actually constructing our Gaussian measures.
For a general reference on Gaussian random variables see \cite{Bo}.
In view of the decomposition considered in Section \ref{sub:space-metrics}, 
it is natural to split the construction into diagonal and orthogonal parts.

Let $g^0$ be our reference metric.  Every other metric of $\Metv$ is of the
form $g^1_x = k_x a_x \cdot g^0_x$ where $k, a$ are smooth functions on $M$
such that $k_x\in K_x$ and $a_x\in A_x$.  In Sections \ref{subsec:diag} and
\ref{subsec:angular} we describe random constructions of $a_x$ and $k_x$
respectively.

It is not hard to verify that $\bigcup_x K_x,\bigcup_x A_x,\bigcup_x G_x$
are subbundles of the Lie-group bundle $\GL(TM)$, and that their Lie
algebras therefore furnish subbundles of the Lie algebra bundle 
$\lgl(TM) \isom \End(TM)$.  Specifically, $\Lie(G_x)$ consists of the
endomorphisms of $T_x M$ of trace zero, $\Lie(K_x)$ consists of the
endomorphisms which are skew-symmetric in the frame $f_x$, and $\Lie(A_x)$
consists of those which are diagonal of trace zero in the frame.

For the constructions below we fix a complete orthonormal basis
$\left\{\psi_j\right\}_{j=0}^{\infty} \subset L^2(M)$ 
such that $\Delta_{g^0}\psi_j+\lambda_j\psi_j=0$, with $\lambda_j$
being a non-decreasing ordering of the spectrum of the Laplace
operator $\Delta_{g^0}$.  Our constructions are in fact independent of the
choice of basis of each eigenspace, but it is more convenient to make an
explcit choice.

\subsection{The radial part}\label{subsec:diag}

We begin by defining a measure on the space of smooth functions $x\mapsto H_x$
such that $H_x \in \Lie(A_x)$ (sections of the bundle $\bigcup_x \Lie(A_x)$).
We follow the recipe of \cite{Morgan}:
choose \emph{decay coefficients} $\beta_j = F(\lambda_j)$ where $F(t)$
is an eventually monotonically decreasing
function of $t$ and $F(t)\to 0$ as $t\to\infty$.  Then set

\begin{equation}\label{rand:radial}
H_x = \sum\limits_{j=1}^{\infty}\pi_n(\uxi_j) \beta_j\psi_j(x),
\end{equation}
where $\uxi_j$ are i.i.d.~standard Gaussians in $\R^{n}$, and
$\pi_n\colon\R^n \to \R^n$ is the orthogonal projection on the
hyperplane $\sum_{i=1}^{n} x_i = 0$.

Finally, set $$ a_x = \exp(H_x)$$
where $\exp$ is the exponential map to $A_x$ from its Lie algebra.

The smoothness of $H$ defined by \eqref{rand:radial} is given by
\cite[Theorem 6.3]{Morgan}.  The following two propositions apply
whenver $\uxi_j$ in \eqref{rand:radial} denotes a
$d$-dimensional standard Gaussian, while $M$ has dimension $n$.

\begin{prop}\label{prop:morgan}
If $\beta_j=O(j^{-r})$ where $r>(q+\alpha)/n+1/2$, then $H$ defined
by \eqref{rand:radial} converges a.s. in
$C^{q,\alpha}(M,\R^d)$.
\end{prop}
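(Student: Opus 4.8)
The plan is to realise $H$ as a centred, vector-valued Gaussian field on $M$ and to show that its partial sums converge almost surely in the Banach space $C^{q,\alpha}(M,\R^d)$. First I would reduce to the scalar case: the vectors $\pi_n(\uxi_j)$ are i.i.d.\ centred Gaussian vectors of bounded covariance, so each of the $d$ coordinates of $H$ is a scalar Gaussian series of the same form $\sum_j \eta_j\beta_j\psi_j$, with the $\eta_j$ i.i.d.\ centred Gaussians of bounded variance; the finite number of coordinates and the covariance constants are absorbed into the implicit constants. By the It\^o--Nisio theorem, almost sure convergence of a Gaussian series in a separable Banach space is equivalent to its convergence in probability, so it suffices to show that the partial sums $S_N=\sum_{j\le N}\eta_j\beta_j\psi_j$ are Cauchy in mean, i.e.\ $\E\norm{S_N-S_M}_{C^{q,\alpha}}\to 0$ as $M,N\to\infty$.

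Next I would bound the expected H\"older norm of a Gaussian tail field $Z=\sum_{M<j\le N}\eta_j\beta_j\psi_j$. For each multi-index $\gamma$ with $|\gamma|\le q$ I would estimate $\E\norm{\partial^\gamma Z}_{C^{0,\alpha}}$ via the Garsia--Rodemich--Rumsey inequality (equivalently a Dudley entropy bound), which reduces the expected sup- and H\"older-norms to two pointwise second moments: the variance $\var\!\big(\partial^\gamma Z(x)\big)=\sum_{M<j\le N}\beta_j^2\,|\partial^\gamma\psi_j(x)|^2$ and the increment variance $\E|\partial^\gamma Z(x)-\partial^\gamma Z(y)|^2=\sum_{M<j\le N}\beta_j^2\,|\partial^\gamma\psi_j(x)-\partial^\gamma\psi_j(y)|^2$. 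Since $Z$ is Gaussian, all higher moments are controlled by these ($\E|\cdot|^{2m}\le C_m(\E|\cdot|^2)^m$), so the GRR/entropy integrals converge and the expected H\"older norm is bounded by a fixed power of these two quantities.

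The analytic heart is to estimate the two sums with the sharp exponent. Here I would use Weyl's law $\lambda_j\asymp j^{2/n}$ together with the pointwise (local) Weyl law for the spectral function and its derivatives, $\sum_{\lambda_j\le\lambda}|\partial^\gamma\psi_j(x)|^2\asymp c_\gamma(x)\,\lambda^{\,n/2+|\gamma|}$, and its increment version. Writing $\beta_j=O(\lambda_j^{-nr/2})$ and converting the sums to Stieltjes integrals against $\lambda^{\,n/2+|\gamma|}$, the variance is finite exactly when $r>|\gamma|/n+1/2$. For the increments I would split the frequencies at the scale $\lambda\sim d(x,y)^{-2}$: on low frequencies the extra derivative costs a factor $\lambda\,d(x,y)^2\le 1$ (from a Lipschitz bound on $\partial^\gamma\psi_j$), on high frequencies one uses the trivial bound, and both regimes yield $\E|\partial^\gamma Z(x)-\partial^\gamma Z(y)|^2\le C\,d(x,y)^{2\alpha'}$ with $\alpha'=nr-n/2-|\gamma|$. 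For $|\gamma|=q$ the requirement $\alpha'>\alpha$ is precisely $r>(q+\alpha)/n+1/2$. Feeding these bounds, with the tail variances ($M\to\infty$) tending to $0$, into the GRR estimate gives $\E\norm{S_N-S_M}_{C^{q,\alpha}}\to0$; equivalently, Kolmogorov--Chentsov (using Gaussian moments of all orders) is the classical reason the exponent $\alpha'>\alpha$ produces genuine $C^{0,\alpha}$ regularity of each $\partial^\gamma H$.

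I expect the main obstacle to be obtaining the sharp constant $1/2$ rather than $1$. A naive argument --- absolute convergence of the series, or the $L^2$-based Sobolev embedding $H^s\hookrightarrow C^{q,\alpha}$ for $s>q+\alpha+n/2$ combined with $\E\norm{H}_{H^s}^2=C\sum_j(1+\lambda_j)^s\beta_j^2$ --- only yields $r>(q+\alpha)/n+1$, losing half a derivative; similarly the global sup-norm bound $\norm{\psi_j}_\infty\le C\lambda_j^{(n-1)/4}$ is too lossy and gives essentially the same weak threshold. Recovering the extra $1/2$ forces one to exploit Gaussian cancellation (the $\sqrt{\log}$ gain in the entropy integral, i.e.\ $\ell^2$ rather than $\ell^1$ summation of the eigenfunction contributions) and to replace the global $L^\infty$ eigenfunction bound by the on-diagonal local Weyl law, which captures the averaging valid for the variance sum even though individual eigenfunctions can be large. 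Verifying the local Weyl law for derivatives and its increment form, and justifying the two-regime frequency split, is where the real work lies.
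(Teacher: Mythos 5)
Your argument is correct, but it is worth knowing that the paper itself offers no proof of this proposition: it is quoted verbatim from Morgan [Mor, Theorem~6.3], and the text simply says that the smoothness of $H$ "is given by" that reference. What you have written is essentially a self-contained reconstruction of the argument behind Morgan's theorem, and your bookkeeping is right: with $\lambda_j\asymp j^{2/n}$ and the on-diagonal local Weyl bound $\sum_{\lambda_j\le\lambda}|\partial^\gamma\psi_j(x)|^2\lesssim\lambda^{n/2+|\gamma|}$ (uniform in $x$; only the upper bound is needed, and it follows for instance from the derivative bounds on the heat kernel $\partial_x^\gamma\partial_y^\gamma e(t,x,y)|_{y=x}\lesssim t^{-n/2-|\gamma|}$), the variance and increment sums give exactly $\E|\partial^\gamma Z(x)-\partial^\gamma Z(y)|^2\lesssim d(x,y)^{2\alpha'}$ with $\alpha'=nr-n/2-q$, and $\alpha'>\alpha$ is precisely $r>(q+\alpha)/n+1/2$. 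Your diagnosis of why the naive $H^s\hookrightarrow C^{q,\alpha}$ route loses half a derivative, and why the on-diagonal spectral averaging recovers it, is also the right way to understand where the constant $1/2$ comes from. Two small technical points you should patch when writing this up. First, $C^{q,\alpha}(M)$ with the H\"older norm is not separable, so It\^o--Nisio does not apply to it directly; work instead in the little H\"older space $c^{q,\alpha}$ (the closure of $C^\infty$ in the $C^{q,\alpha}$ norm, which contains all the partial sums and is separable), or bypass It\^o--Nisio entirely by using the slack $\alpha'>\alpha$: bound $\E\norm{S_{2^{k+1}}-S_{2^k}}_{C^{q,\alpha}}$ by a summable geometric sequence and apply Markov plus Borel--Cantelli, then a L\'evy-type maximal inequality for the intermediate partial sums. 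Second, in the low-frequency increment estimate the Lipschitz bound on $\partial^\gamma\psi_j$ must be taken along the geodesic from $x$ to $y$, so apply Cauchy--Schwarz along the path before summing in $j$ and then use the uniform-in-$x$ local Weyl bound for $|\gamma|+1$ derivatives; using the global sup of $\partial^{\gamma+1}\psi_j$ for each individual $j$ would reintroduce the lossy eigenfunction bounds you correctly rejected.
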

We remark that the exponents in Proposition \ref{prop:morgan} are
independent of $d$ (the dimension of the ``target'' space).
Now Weyl's law for $M$ \cite{MP,Av} states that $\lambda_j$ grows roughly
as $j^{2/n}$.  It follows that
\begin{prop}\label{diagonal:smooth}
If $\beta_j=O(\lambda_j^{-s})$ where $s>q/2+n/4$, then $H$ defined
by \eqref{rand:radial} converges a.s. in $C^q(M,\R^d)$.
\end{prop}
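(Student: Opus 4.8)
The plan is to deduce Proposition \ref{diagonal:smooth} directly from Proposition \ref{prop:morgan}, the only new ingredient being Weyl's asymptotic law, which converts a decay condition phrased in terms of the eigenvalues $\lambda_j$ into one phrased in terms of the index $j$.

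First I would recall Weyl's law for the Laplace operator $\Delta_{g^0}$ on the compact $n$-dimensional manifold $M$: the eigenvalue counting function $N(\lambda) = \#\{j : \lambda_j \leq \lambda\}$ (eigenvalues counted with multiplicity) satisfies
\begin{equation*}
N(\lambda) \sim \frac{\omega_n \Vol(M)}{(2\pi)^n}\, \lambda^{n/2}, \qquad \lambda \to \infty,
\end{equation*}
where $\omega_n$ is the volume of the unit ball in $\R^n$. Inverting this asymptotic gives $\lambda_j \asymp j^{2/n}$, that is, there exist constants $0 < c_1 \leq c_2 < \infty$ with $c_1 j^{2/n} \leq \lambda_j \leq c_2 j^{2/n}$ for all sufficiently large $j$. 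In particular $\lambda_j^{-s} \asymp j^{-2s/n}$, so the hypothesis $\beta_j = O(\lambda_j^{-s})$ is equivalent to $\beta_j = O(j^{-2s/n})$.

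Next I would carry out the exponent bookkeeping needed to feed this into Proposition \ref{prop:morgan}, applied with $r = 2s/n$. That proposition requires $r > (q + \alpha)/n + 1/2$; multiplying through by $n/2$, this is exactly the condition $s > (q + \alpha)/2 + n/4$. Because the hypothesis $s > q/2 + n/4$ is a \emph{strict} inequality, there is room to choose a H\"older exponent $\alpha > 0$ small enough that $s > (q + \alpha)/2 + n/4$ continues to hold. For this $\alpha$, Proposition \ref{prop:morgan} yields almost-sure convergence of the series \eqref{rand:radial} in $C^{q,\alpha}(M, \R^d)$. Since the $C^q$ norm is dominated by the $C^{q,\alpha}$ norm, we have the continuous inclusion $C^{q,\alpha}(M, \R^d) \hookrightarrow C^q(M, \R^d)$, and so the series also converges almost surely in $C^q(M, \R^d)$, as claimed.

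The argument is essentially routine, and I expect the only point requiring genuine care is the passage from $N(\lambda)$ to the two-sided comparison $\lambda_j \asymp j^{2/n}$: one must invoke Weyl's law as a genuine asymptotic (equivalently, a two-sided bound on $N$), not merely an upper bound, so that $\lambda_j^{-s}$ and $j^{-2s/n}$ are comparable in both directions. Once eigenvalues are counted with multiplicity this inversion is standard, and since convergence of the series is a property of its tail the finitely many small-index terms are irrelevant; the remainder of the proof is just the chaining of Weyl's law with the H\"older embedding described above.
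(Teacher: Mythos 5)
Your proof is correct and follows exactly the route the paper intends: the paper derives Proposition \ref{diagonal:smooth} from Proposition \ref{prop:morgan} with the single phrase ``substituting into Weyl's law,'' and your write-up simply makes explicit the inversion $\lambda_j \asymp j^{2/n}$, the exponent bookkeeping $r = 2s/n > (q+\alpha)/n + 1/2 \iff s > (q+\alpha)/2 + n/4$, the choice of a small $\alpha > 0$ permitted by the strict inequality, and the embedding $C^{q,\alpha} \hookrightarrow C^q$. The only (harmless) over-caution is that for the implication $\beta_j = O(\lambda_j^{-s}) \Rightarrow \beta_j = O(j^{-2s/n})$ one only needs the lower bound $\lambda_j \gtrsim j^{2/n}$, not the full two-sided comparison.
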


%%%%%%%%

\subsection{The angular part}\label{subsec:angular}

In this paper we study invariants of $g^1$ that can be bound only using
$a$, so that our later calculations will only depend on the marginal
distribution of $a$.  Thus, as long as the choices of $k$ and $a$ are
independent, the choice of $k$ has no effect.
In future work we plan to ask more detailed questions where this choice
will become relevant.  For example,
determining the curvature of $g^1$ following the ideas of \cite{CJW}
requires differentiating $g^1_x$ with respect to $x$ and this immediately
involves the choice of $k_x$.  We thus propose the following specific choice,
again using the recipe of Equation \eqref{rand:radial}.  We set
$$k_x = \exp_x(u_x)$$
where $u_x$ is the Gaussian vector
\begin{equation}\label{rand:angular}
u_x = \sum_{j=1}^{\infty} \uet_j \delta_j \psi_j(x).
\end{equation}
Here $\uet_j\in\so_n$ are i.i.d.~standard Gaussian anti-symmetric matrices
(i.e.~each $\uet_j$ is given by $d_n = n(n-1)/2$ i.i.d.\ standard Gaussian
variables corresponding to the upper-triangular part of $\uet_j$),
and $\delta_j = F_2(\lambda_j)$ are decay factors, given as functions of
the corresponding eigenvalues.

Proposition \ref{prop:morgan} above applies again to give the smoothness
properties of our random sections.  In particular, since the exponents in
Proposition \ref{prop:morgan} are independent of $d_n$, substituting
into Weyl's law we get a straightforward analogue of Proposition 
\ref{diagonal:smooth} for the expression \eqref{rand:angular}.  

%\begin{prop}\label{orthogonal:smooth}
%If $\delta_j=O(\lambda_j^{-s})$ where $s>q/2+n/4$, then $u$ defined
%by \eqref{rand:angular} converges a.s. in $C^q(M,\R^n)$.
%\end{prop}

\subsection{Remarks on the construction}

For readers who may not wish to refer to a textbook such as \cite{Bo},
we briefly recall that a random vector such as $H_x$ is $\emph{Gaussian}$
if its finite-dimensional marginals are, which in our case roughly means
(though we want more) that for every $k$ points $x_1,\ldots,x_k\in M$,
the joint distribution of the finite-dimensional vector
$(H_{x_1},\ldots,H_{x_k})$ is Gaussian.

Our Gaussian vectors are balanced (their expectation is zero) and they
are therefore determined by their covariance function (roughly, the
function on $M\times M$ given by the expectation of $H_{x_1}\otimes H_{x_2}$.

\begin{remark}
For the convenience of the reader who prefers Gaussian variables to be
defined by their covariance function, we note here the covariance functions
relevant to our case.

Let $\lieg_x = \lsl(T_xM)$ denote the Lie algebra of $\SL(T_x M)$.  
As noted above our Gaussian measure is defined on appropriate spaces of
sections of subbundles of the bundle $\bigcup_x \lieg_x$.
With sufficient continuity it is enough to consider the covariance operator
evaluated on linear functionals of the form $X\mapsto \alpha_x(X(x))$,
where $X$ is a section of the bundle and $\alpha_x \in \lieg_x^*$.  

Our Gaussian measure for the diagonal part then has the covariance functions
\begin{equation}\label{eq:covariance}
R((x,k),(x',k'))=\delta_{kk'}\sum_j \beta_j^2 \psi_j(x)\psi_j(x')\,,  
\end{equation}
where $k$ is an index for the diagonal entries of a matrix in $\lieg_x$, 
diagonal with respect to our fixed frame and $(x,k)$ therefore denote the
functional mapping the section $H_x$ to the $k$th entry of the diagonal matrix at $x$.
.  The angular part has a similar
covariance function.

For standard choices of $\beta_k$, we note that the covariance function for
analogously-defined scalar fields would be well-known spectral invariants:
we'd have 
\begin{equation}\label{covar:diagonal}
r(x,y)=
\begin{cases}
Z(x,y,2s):=\sum_{k=1}^\infty\frac{\psi_k(x)\psi_k(y)}{\lambda_k^{2s}},
\qquad\beta_k=\lambda_k^{-s};\\
e^*(x,y,2t):=\sum_{k=1}^\infty\frac{\psi_k(x)\psi_k(y)}{e^{2t\lambda_k}},
\qquad\beta_k=e^{-t\lambda_k}.
\end{cases}
\end{equation}
Here $Z(x,y,2s)$ is known as the \emph{spectral zeta function}
of $\Delta_0$ (see e.g. \cite{MP}), while $e^*(x,y,2t)$ is the corresponding \emph{heat kernel} 
(see e.g. \cite{BGV} or \cite[Ch. 6]{Chavel}),  
both taken \emph{without the constant term} that would correspond to the
constant eigenfunction $\psi_0$ with eigenvalue zero.
\end{remark}

\begin{remark}
When taking $\beta_j = \lambda_j^{-s}$, the parameter $s$
determines the a.s.~Sobolev regularity of the random metric $g$
via Propositions \ref{prop:morgan} and \ref{diagonal:smooth}.
If the metric $g^0$ is
real-analytic, then letting $\beta_k=e^{-t\lambda_k}$ makes the
random metric $g$ real-analytic as well, with the parameter $t$
related to the a.s.~radius of analyticity (the exponent in rate of
decay of Fourier coefficients).
\end{remark}

\begin{remark}
A similar construction applies to the space of \emph{all}
Riemannian metrics on $M$ (without necessarily fixing the
volume form).  We now work in the symmetric space $\GL(T_x M)/\gO(g^0_x)$.
The only change is that in Equation \eqref{rand:radial} one lets $A_j$
be standard vector-valued Gaussians without the projection.

There is a Riemannian structure and an $L^2$ metric (due to Ebin) defined on
the space of all metrics.  A detailed study of the metric properties of this
space was undertaken in \cite{Clarke}.
\end{remark}

\begin{comment}
\begin{remark}
Gaussian measures on the space of metrics considered here induce Gaussian measures on the 
identity component of the group of diffeomorphisms of $M$.  DISCUSS THIS FURTHER. 
\end{remark}
\end{comment}
%%%%%%
%%%%%%%%%%

%%%%%%%%%%

\section{$\Omega_2$ as a random variable}\label{sec:fixvolume:om2}

In this section we study the statistics of $\Omega_2^2$.

\subsection{The distribution function}

We recall one definition of the (fiber-wise) distance $d_x$ introduced
in Section \ref{sub:L2-distance}. For this choose a a basis for $T_x
M$ orthonormal with respect to $g^0(x)$ (in this basis the reference
metric $g^0_x$ is represented by the identity matrix).  If the translation
from $g^0_x$ to $g^1_x$ is given by the element $k_x a_x \in G_x$ with
$a_x$ diagonal in the chosen basis, $k_x$ orthogonal, then the metric
$g^1_x$ is represented by the symmetric positive-definite matrix
$k_x a_x^2 k_x^{-1}$.  Writing $e^{b_i(x)}$ for the diagonal entries
of $a_x$, we have
$$ d_x^2(g^0_x,g^1_x) = \sum_{i=1}^{n} b_i(x)^2. $$

Accordingly,
\begin{equation}\label{Omega2:formula}
\Omega_2^2(g^0,g^1) = \int_M \left(\sum_{i=1}^n b_i(x)^2\right) \dv(x).
\end{equation}

In our random model, the vector-valued function $b(x)$ is a Gaussian
random field, chosen according to Equation \eqref{rand:radial}, where
here we choose $\pi_n$ to be the orthogonal projection.  In other
words $b(x)$ is defined by projecting an isotropic Gaussian in $\R^n$
orthogonally to the hyperplane $\sum_i b_i(x) = 0$. Integrating over
$x$, we find that the distribution of $\Omega_2^2$ is given by:
$$
\Omega_2^2 \overset{D}{=} \sum_j \beta_j^2 \sum_{i=1}^{n-1}
W_{i,j}
$$
where the $W_{i,j}$ are independent random variables with $\chi^2$
distribution.  We can rewrite this as
$$
\Omega_2^2 \overset{D}{=} \sum_j \beta_j^2 V_j
$$
with i.i.d.~$V_j \sim \chi^2_{n-1}$ ($\chi^2$ distribution with $n-1$
degrees of freedom).

Recall that the \emph{moment generating function} of the random
variable $X$ is the function $M_X(t) = \EE\left(\exp(tX)\right)$.
These can be used, for example, to estimate the probability of large
deviations of the variable $X$.  Having represented $\Omega_2^2$ as the
sum of independent variables with known distribution, we can now explicitly
compute its moment generating function as the product
$$
\begin{aligned}
M_{\Omega^2_2}(t) &= \prod_j \prod_{i=1}^{n-1} M_{\chi^2_1}(t\beta_j^2)
                      = \prod_j \prod_{i=1}^{n-1} (1-2t \beta_j^2)^{-1/2}\\
               \; &= \prod_j(1-2t \beta_j^2)^{-(n-1)/2}
\end{aligned}
$$

The following result is proved similarly. 
\begin{prop}\label{char:fxn}
The characteristic function $E(\exp(it \Omega^2_2))$ can be computed explicitly as
$$
\prod_j \prod_{k=1}^{n-1} (1-2it \beta_j^2)^{-1/2}
=\prod_j(1-2it \beta_j^2)^{-(n-1)/2}.  
$$
\end{prop}

\subsection{Tail estimates for $\Omega_2^2$}\label{sec:tail:l2}

Here we apply \cite[Lemma 1, (4.1)]{LM} to estimate the probability of
the following events:
\begin{equation}\label{pro:l2:large}
\Prob\{\Omega_2^2>R^2\},\qquad R\to\infty.
\end{equation}

We let $W=\sum_i a_i Z_i^2$ with $Z_i$ i.i.d.~standard Gaussians,
and for $(n-1)(j-1)+1\leq i\leq (n-1)j$, we have $a_i=\beta_j^2$
(i.e.~each $\beta_j^2$ is repeated $(n-1)$ times). We let 
$\norm{a}_\infty=\sup_j a_j$.  Assume from now on that $\beta_j=F(\lambda_j)$ 
is a {\em monotone
decreasing} function; then $\norm{a}_\infty=a_1=\beta_1^2$.

It is shown in 
\cite[Lemma 1, (4.1)]{LM} that for 
$W_k=\sum_{i=1}^{k(n-1)}a_i Z_i^2$, we have 
$$
\Prob\{W_k\geq\sum_{i=1}^{k(n-1)}a_i+2\left(\sum_{i=1}^{k(n-1)}a_i^2\right)^{1/2}\sqrt{y} + 
2\norm{a}_\infty y\} 
\leq e^{-y}.  
$$
Letting $k\to\infty$, we get the following quantities: 
\begin{itemize}
\item[] $W:=\lim_{k\to\infty}W_k=\Omega_2^2$; 
\item[] $A^2=\sum_{i=1}^\infty a_i=(n-1)\sum_{j=1}^\infty\beta_j^2$; 
\item[] $B^4=\sum_{i=1}^\infty a_i^2=(n-1)\sum_{j=1}^\infty\beta_j^4$; 
\item[] $\norm{a}_\infty=a_1=\beta_1^2$.  
\end{itemize}
With $x^2$ instead of $y$, we get:
$$
\Prob\{W\geq A^2 + 2B^2 x +2\norm{a}_\infty^2 x^2\} 
\leq e^{-x^2}.  
$$
Solving
$$
R^2=2||a||_\infty^2 x^2+2B^2 x+A^2.  
$$
for $x$ gives (for $R\geq A$) the following root:
\begin{equation}\label{x-of-R}
x(R)=\frac{-B^2+\sqrt{B^4+2(R^2-A^2)||a||_\infty^2}}{2||a||_\infty^2}.  
\end{equation}
We conclude that
$$
\Prob\{\Omega_2\geq R\} \leq e^{-(x(R))^2}, 
$$ 
where $x(R)$ is given by \eqref{x-of-R}.  

It is easy to show that there exists a constant $C=C(A,B,||a||_\infty)$ such that 
for $R\geq A$, we have 
$$
x(R)^2\geq\frac{R^2}{2||a||_\infty^2}-CR=\frac{R^2}{2\beta_1^2}-CR.  
$$

We also notice that 
$$
\Prob\{\Omega_2\geq R\}\geq\Prob\{\beta_1^2Z_1^2\geq R^2\}=
\Psi\left(\frac{R}{\beta_1}\right) 
\geq C\frac{\beta_1e^{-R^2/(2\beta_1^2)}}{R}, 
$$
provided $R\geq \beta_1$.

To summarize: 
\begin{theorem}\label{tail:Omega_2}
For $R\geq A$, we have 
$$
\frac{C\beta_1}{R}\exp\left(\frac{-R^2}{2\beta_1^2}\right)
\leq \Prob\{\Omega_2\geq R\}
\leq \exp\left(\frac{-R^2}{2\beta_1^2}+CR\right).  
$$
\end{theorem}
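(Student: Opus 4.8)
The plan is to establish the two inequalities separately, both starting from the distributional identity $\Omega_2^2 \overset{D}{=} \sum_j \beta_j^2 V_j$ (with $V_j \sim \chi^2_{n-1}$ i.i.d.) derived above, rewritten as $\Omega_2^2 \overset{D}{=} \sum_i a_i Z_i^2$ where the $Z_i$ are i.i.d.\ standard Gaussians and the weights $a_i$ are obtained by repeating each $\beta_j^2$ exactly $(n-1)$ times. Since $\beta_j = F(\lambda_j)$ is monotone decreasing to $0$, the largest weight is $\norm{a}_\infty = \beta_1^2$, and the sums $A^2 = (n-1)\sum_j \beta_j^2$ and $B^4 = (n-1)\sum_j \beta_j^4$ are finite; this finiteness is what legitimizes the limiting argument below, and note also that $A \geq \beta_1$ (as $n \geq 3$), so the hypothesis $R \geq A$ also gives $R \geq \beta_1$.

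For the upper bound I would apply the Laurent--Massart concentration inequality \cite[Lemma 1, (4.1)]{LM} to the truncations $W_k = \sum_{i=1}^{k(n-1)} a_i Z_i^2$, giving for each $k$
$$\Prob\Bigl\{ W_k \geq \sum_{i=1}^{k(n-1)} a_i + 2\Bigl(\sum_{i=1}^{k(n-1)} a_i^2\Bigr)^{1/2}\sqrt{x} + 2\norm{a}_\infty x \Bigr\} \leq e^{-x}.$$
Reparametrizing $x \mapsto x^2$ turns the right-hand side into $e^{-x^2}$ and the threshold into $A_k^2 + 2B_k^2 x + 2\norm{a}_\infty x^2$, where $A_k, B_k$ are the partial analogues of $A, B$. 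Since $A_k \leq A$ and $B_k \leq B$, one may enlarge the threshold to the $k$-independent value $A^2 + 2B^2 x + 2\norm{a}_\infty x^2$ and then let $k \to \infty$: the events $\{W_k \geq A^2 + 2B^2 x + 2\norm{a}_\infty x^2\}$ increase, $W_k \uparrow \Omega_2^2$, and $\Omega_2^2$ has no atoms, so monotone convergence yields $\Prob\{\Omega_2^2 \geq A^2 + 2B^2 x + 2\norm{a}_\infty x^2\} \leq e^{-x^2}$. Equating the threshold with $R^2$ and solving the quadratic $R^2 = 2\norm{a}_\infty x^2 + 2B^2 x + A^2$ for its positive root $x(R)$ (cf.\ \eqref{x-of-R}) gives $\Prob\{\Omega_2 \geq R\} \leq e^{-x(R)^2}$ for $R \geq A$. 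The final step is the elementary estimate $x(R)^2 \geq \frac{R^2}{2\beta_1^2} - CR$ with $C = C(A, B, \beta_1)$, obtained by isolating the leading behaviour $x(R) \sim R/(\beta_1\sqrt{2})$ of the root and bounding the lower-order contributions uniformly over $R \geq A$.

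For the lower bound I would discard all but the single largest term: because every summand $a_i Z_i^2$ is nonnegative, $\Omega_2^2 \geq \beta_1^2 Z_1^2$, whence $\Prob\{\Omega_2 \geq R\} \geq \Prob\{\beta_1^2 Z_1^2 \geq R^2\} = \Psi(R/\beta_1)$ with $\Psi(t) = \Prob\{|Z_1| \geq t\}$. The claimed lower bound then follows from the standard Mills-ratio estimate $\Psi(t) \geq C e^{-t^2/2}/t$ for $t \geq 1$, applied at $t = R/\beta_1 \geq 1$, which holds precisely because $R \geq A \geq \beta_1$.

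I expect the only real subtlety to lie in the upper-bound half, and it is mostly bookkeeping rather than a genuine obstacle: the passage to the limit $k \to \infty$ leans on the summability of $\sum_j \beta_j^2$ and $\sum_j \beta_j^4$ (i.e.\ the finiteness of $A$ and $B$), and the uniform control of the quadratic root over the whole range $R \geq A$ must produce a single constant $C$. The conceptual reason the two bounds have the matching exponent $R^2/(2\beta_1^2)$ is that the tail of $\Omega_2^2$ is dictated entirely by the single largest weight $\beta_1^2$: upstairs this exponent comes from the dominant quadratic coefficient $2\norm{a}_\infty = 2\beta_1^2$, and downstairs it comes from retaining only the $Z_1$ term. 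The residual gap between the two sides --- the additive $CR$ in the exponent above and the polynomial prefactor $\beta_1/R$ below --- is of strictly lower order and is not expected to close.
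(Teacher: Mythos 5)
Your proposal is correct and follows essentially the same route as the paper: the upper bound via the Laurent--Massart inequality applied to the truncations $W_k$, passage to the limit, solving the quadratic for $x(R)$ and bounding $x(R)^2 \geq R^2/(2\beta_1^2) - CR$; the lower bound by retaining only the single term $\beta_1^2 Z_1^2$ and the Mills-ratio estimate. If anything, your bookkeeping of $\norm{a}_\infty = \beta_1^2$ in the quadratic coefficient is cleaner than the paper's (which has a harmless notational slip writing $\norm{a}_\infty^2$ where $\norm{a}_\infty$ is meant).
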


%%%%%%%%%%%%%%%%%%%%%%%%%%%%

\appendix \setcounter{section}{-1}
\renewcommand{\thesection}{A}

\section*{Appendix by Y. Canzani, D. Jakobson and L. Silberman\\ Lipschitz distance.  Applications to the 
study of diameter and Laplace eigenvalues.}\label{section:appendix}

In this section we shall prove tail estimates for a Lipschitz-type distance
$\rho$ defined below, and use those estimates to prove that the
\emph{diameter} and \emph{Laplace eigenvalue} functionals are measurable
with respect to the Gaussian measures defined in Section \ref{sec:Gaussian},
and to give tail estimates for them.  We maintain the hypothesis that all
metrics under consideration have the same associated volume form $\dv$, though
the results could be easily modified to remove this assumption.

\subsection{Lipschitz distance}\label{Lipschitz}

Here we study a (Lipschitz-type) distance $\rho$ related to the
distance used in \cite{BU} by Bando and Urakawa. It is defined by
\begin{equation}\label{dist:Linf}
\rho(g^0,g^1)=\sup_{x\in M}\ \sup_{0\neq\xi\in
T_xM}\left|\ln\frac{g^1(\xi,\xi)}{g^0(\xi,\xi)}\right|
\end{equation}
In other words, it is determined by taking the identity map on $M$ and
considering its Lipschitz constants between the two metrics.
Note that the fiber-wise constant is also the larger of the Lipschitz
constants of the map and its inverse: on the one hand, clearly for any
curve on $M$, its $g^1$-length is at most $\exp(\rho(g^0,g^1))$ times
its $g^0$ length, and conversely for the $(x,\xi)$ achieving the supremum,
taking $y$ near $x$ in the direction $\xi$ we see that the $g^0$ and $g^1$-
distances between $x,y$ are roughly distorted by the same factor (though
we do not know which is larger).

As in the case of $\Omega_2$, $\rho(g^1,g^0)$ depends only on $a_x$
where $g^1_x=k_x a_x \cdot g^0_x$ (action of $G_x$ on $\Hom(T_xM,T^*_xM)$
by composition; in the representation of metrics are positive-definite matrices
this is the congruence action $g^2_x = k_x a_x g^0_x a_x k_x^{-1}$).
In the our adapted frame, the diagonal
part of $g^1$ has entries $e^{2b_i(x)}$, where $\sum_i b_i(x)=0$
for every $x\in M$, and where the vector $b(x)=(b_1(x),\ldots,b_n(x))$
is defined by the formula \eqref{rand:radial}.  Specifically, for any
$x\in $M the second supremum in \eqref{dist:Linf} is equal to
\begin{equation}\label{local:sup}
2\sup_i |b_i(x)|
\end{equation}
The supremum is attained for $\xi=e_i$ (the $i$-th unit vector in $T_xM$).  
Accordingly,
\begin{equation}\label{dist:Linf:2}
\rho(g^0,g^1)=2\sup_{1\leq i\leq n}\sup_{x\in M} |b_i(x)|
\end{equation}

%%%%%%

\subsection{Tail estimate for $\rho$}
Now, $\rho>R$ iff $\sup_j\sup_{x\in M} |2b_i(x)|>R$.  Accordingly,
\begin{equation}\label{rho:ineq1}
\Prob\{\rho(g^0,g^1)>R\}\leq \Prob\{\sup_{x\in M}\sup_i |b_i(x)|>R/2\}.
\end{equation}
Recall that $\diag(b_1,\ldots,b_n)$ is given by projecting a random
vector on a particular hyperplane, which does not increase the maximum
norm.  It follows that 
$$
\Prob\{\rho(g^1,g^0)>R\}\leq \Prob\{\sup_{x\in M}\sup_j |a_j(x)|>R/2\}, 
$$
where $a_j$ are the components of an $\R^n$-valued Gaussian vector.
By symmetry we have for fixed $i$ that
$$
\Prob\{\sup_{x\in M} |a_i(x)|>u\} \leq 2\cdot\Prob\{\sup_{x\in M} a_i(x)>u\}.
$$
Taking the union bound we find that
\begin{equation}\label{eq:rhoineq}
\Prob\{\rho(g^0,g^1)>R\}\leq 2n\cdot\Prob\{\sup_{x\in M} a_1(x)>R/2\}.  
\end{equation}

We would like to estimate this probability as $R\to\infty$.  We will need
the covariance function for the scalar random field $a_1(x)$,
given by (see \eqref{eq:covariance})
$$
r_{a_1}(x,y)=\sum_{k=1}^\infty \beta_k^2\psi_k(x)\psi_k(y),
$$
where $\psi_k$ denote the $L^2$-normalized eigenfunctions of
$\Delta(g_0)$.

The following result now follows in a standard way from the Borell-TIS
theorem; it can be easily deduced from the calculations in \cite[\S
3]{CJW} and \cite[\S 2, (2.1.3)]{AT08}.  We denote by $\sigma^2$ the supremum
of the variance $r_{a_1}(x,x)$:
\begin{equation}\label{variance:sup}
\sigma^2:=\sigma(a_1)^2:=\sup_{x\in M} r_{a_1}(x,x).
\end{equation}
  
\begin{prop}\label{prop:excursion}
Let $\sigma(a_j)$ be as in \eqref{variance:sup}.  Then
$$
\lim_{R\to\infty} \frac{\ln\Prob\{\sup_{x\in M} a_1(x)>R/2\} }{R^2} =
\frac{-1}{8\sigma^2}.
$$
\end{prop}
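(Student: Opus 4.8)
The plan is to prove the claimed limit by establishing two matching one-sided bounds: an upper bound on the $\limsup$ via the Borell--TIS concentration inequality, and a lower bound on the $\liminf$ by testing the supremum against a single well-chosen point. Throughout I would write $M_f = \sup_{x\in M} a_1(x)$. The smoothness results of Section \ref{sec:Gaussian} (Propositions \ref{prop:morgan} and \ref{diagonal:smooth}) guarantee that $a_1$ has a.s.\ continuous sample paths, hence is a.s.\ bounded; in particular $M_f$ is a well-defined random variable and, by Borell--TIS, $\E M_f < \infty$.

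For the upper bound, I would invoke the Borell--TIS theorem (as in \cite[\S 2, (2.1.3)]{AT08}), which gives, for all $u > 0$,
$$\Prob\{M_f - \E M_f > u\} \leq \exp\left(-\frac{u^2}{2\sigma^2}\right).$$
Taking $u = R/2 - \E M_f$ (positive once $R$ is large) yields
$$\Prob\{\sup_{x\in M} a_1(x) > R/2\} \leq \exp\left(-\frac{(R/2 - \E M_f)^2}{2\sigma^2}\right).$$
Dividing the logarithm by $R^2$ and letting $R\to\infty$, the term $\E M_f / R$ tends to $0$ since $\E M_f$ is a finite constant, so that $(R/2 - \E M_f)^2/R^2 \to 1/4$ and
$$\limsup_{R\to\infty} \frac{\ln \Prob\{\sup_{x\in M} a_1(x) > R/2\}}{R^2} \leq -\frac{1}{8\sigma^2}.$$

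For the matching lower bound, I would note that the variance function $x \mapsto r_{a_1}(x,x) = \sum_k \beta_k^2 \psi_k(x)^2$ is continuous on the compact manifold $M$, so its supremum $\sigma^2$ is attained at some point $x_0 \in M$. Since $a_1(x_0)$ is a centered Gaussian of variance $\sigma^2$ and $M_f \geq a_1(x_0)$, we have
$$\Prob\{\sup_{x\in M} a_1(x) > R/2\} \geq \Prob\{a_1(x_0) > R/2\} = \Psi\left(\frac{R}{2\sigma}\right),$$
where $\Psi$ denotes the standard Gaussian upper tail. The classical lower estimate $\Psi(t) \geq C t^{-1} e^{-t^2/2}$ then gives $\ln \Psi(R/(2\sigma)) = -R^2/(8\sigma^2) + O(\ln R)$, so that
$$\liminf_{R\to\infty} \frac{\ln \Prob\{\sup_{x\in M} a_1(x) > R/2\}}{R^2} \geq -\frac{1}{8\sigma^2}.$$
Combining the two displayed bounds shows the limit exists and equals $-1/(8\sigma^2)$.

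The argument is standard, as the statement asserts, and the only points requiring care are the verification of the Borell--TIS hypotheses --- a.s.\ boundedness of the sample paths and finiteness of $\E M_f$ --- together with the attainment of the maximal variance. All three follow from facts already available: a.s.\ continuity from the smoothness propositions of Section \ref{sec:Gaussian}, and attainment of $\sigma^2$ from continuity of the variance together with compactness of $M$. Thus the main (mild) obstacle is the bookkeeping around these hypotheses rather than any substantive estimate; the exponent $-1/(8\sigma^2)$ is forced from below by the single-point Gaussian tail, and Borell--TIS shows that the full supremum cannot do asymptotically better.
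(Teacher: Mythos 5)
Your argument is correct and is precisely the ``standard'' deduction from the Borell--TIS theorem that the paper invokes without writing out: the paper's proof consists only of the citation to \cite[\S 2, (2.1.3)]{AT08} and \cite[\S 3]{CJW}, and your upper bound via Borell--TIS concentration around $\E\sup_x a_1(x)$ together with the matching single-point Gaussian lower bound at a maximizer of the variance is exactly what that citation is meant to supply. Your attention to the hypotheses (a.s.\ continuity from Propositions \ref{prop:morgan} and \ref{diagonal:smooth}, finiteness of the expected supremum, and attainment of $\sigma^2$ on the compact manifold $M$) is appropriate and introduces no gap.
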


Proposition \ref{prop:excursion} and \eqref{eq:rhoineq} imply the following 
\begin{cor}\label{tail:Lip}
Let $\sigma^2:=\sup_{x\in M} r_{a_1}(x,x)$.  Then 
\begin{equation}\label{rho:ineq4}
\lim_{R\to\infty} \frac{\ln\Prob\{\rho(g_0,g_1)>R\}}{R^2} \leq 
\frac{-1}{8\sigma^2}.
\end{equation}
\end{cor}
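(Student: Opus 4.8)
The plan is to deduce the Corollary directly from the union bound \eqref{eq:rhoineq} together with the sharp Gaussian asymptotics supplied by Proposition \ref{prop:excursion}. Since we are measuring tails on the exponential scale $R^2$, the combinatorial prefactor $2n$ is harmless, and essentially all of the analytic work has already been absorbed into the Borell--TIS input behind Proposition \ref{prop:excursion}.

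First I would take logarithms in \eqref{eq:rhoineq}, which gives
$$
\ln\Prob\{\rho(g^0,g^1)>R\} \leq \ln(2n) + \ln\Prob\{\sup_{x\in M} a_1(x)>R/2\}.
$$
Dividing through by $R^2$ and letting $R\to\infty$, the term $\ln(2n)/R^2$ tends to $0$, while the remaining term converges to $-1/(8\sigma^2)$ by Proposition \ref{prop:excursion}. Passing to the limit superior I would then obtain
$$
\limsup_{R\to\infty}\frac{\ln\Prob\{\rho(g^0,g^1)>R\}}{R^2} \leq \frac{-1}{8\sigma^2},
$$
which is exactly the asserted bound \eqref{rho:ineq4}; the fixed $\epsilon>0$ in the statement plays no role, as the estimate is already sharp at the quadratic scale and a polynomial-in-$n$ factor contributes nothing in the limit.

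The only point requiring a little care is interpretive rather than technical: the left-hand quantity is \emph{a priori} only a limit superior, not a genuine limit, because \eqref{eq:rhoineq} is a one-sided inequality. I would therefore phrase the conclusion in terms of $\limsup$ (or equivalently state it as an upper bound valid for all sufficiently large $R$). A matching lower bound, which would upgrade this to an equality in the spirit of the two-sided estimate of Theorem \ref{tail:Omega_2}, is neither needed nor claimed here. Accordingly I expect no substantive obstacle: the corollary is just the observation that the union bound over the $n$ diagonal directions and the symmetrization factor $2$ cannot perturb a Gaussian excursion tail of order $e^{-cR^2}$.
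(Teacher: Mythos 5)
Your proposal is correct and is exactly the paper's argument: the corollary is stated there as an immediate consequence of the union bound \eqref{eq:rhoineq} and Proposition \ref{prop:excursion}, with the factor $2n$ disappearing at the $R^2$ logarithmic scale. Your observation that the left-hand side should strictly be a $\limsup$ (and that the $\epsilon$ in the statement is vestigial) is a fair reading of the statement, not a gap in your argument.
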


In the sequel, we shall need a slightly more precise estimate; it follows from the previous discussion 
and the estimates in \cite[\S 2, p. 50]{AT08}.  
\begin{proposition}\label{prob:Lip:precise}
There exists $\alpha>0$ such that for a fixed $\epsilon>0$ and for large enough $R$, we have 
$$
\Prob\{\rho(g_1,g_0)>R\}\leq 2n\exp\left(\frac{\alpha R}{2} - \frac{R^2}{8\sigma^2}\right).  
$$
\end{proposition}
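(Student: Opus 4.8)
The plan is to upgrade the asymptotic statement of Proposition~\ref{prop:excursion} to the non-asymptotic Borell--TIS inequality, which produces precisely a Gaussian tail with a linear correction term. By the union bound \eqref{eq:rhoineq} it suffices to bound $\Prob\{\sup_{x\in M} a_1(x) > R/2\}$, where $a_1$ is the centered Gaussian field on the compact manifold $M$ with covariance $r_{a_1}$ and variance supremum $\sigma^2$ as in \eqref{variance:sup}.

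The first step is to record that $m := \E[\sup_{x\in M} a_1(x)]$ is finite. This uses the a.s.\ continuity of $a_1$: for the decaying choices of $\beta_j$ made in the construction, Propositions~\ref{prop:morgan} and \ref{diagonal:smooth} guarantee that the sample paths are continuous on the compact set $M$, so $\sup_x a_1(x)$ is an a.s.-finite random variable with finite expectation by the standard integrability of suprema of bounded Gaussian fields (\cite[\S 2]{AT08}). With $m$ in hand, I would apply Borell--TIS in the form
$$
\Prob\{\sup_{x\in M} a_1(x) > u\} \leq \exp\!\left(-\frac{(u-m)^2}{2\sigma^2}\right), \qquad u>m,
$$
and substitute $u = R/2$. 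Expanding the square,
$$
-\frac{(R/2-m)^2}{2\sigma^2} = -\frac{R^2}{8\sigma^2} + \frac{m}{2\sigma^2}\,R - \frac{m^2}{2\sigma^2},
$$
so discarding the nonpositive constant term and feeding the result into \eqref{eq:rhoineq} gives
$$
\Prob\{\rho(g_1,g_0) > R\} \leq 2n\exp\!\left(\frac{\alpha R}{2} - \frac{R^2}{8\sigma^2}\right), \qquad \alpha := \frac{m}{\sigma^2},
$$
valid for all $R > 2m$. Since any larger value of $\alpha$ only weakens the bound, one may take $\alpha$ to be any positive number $\geq m/\sigma^2$, so the required $\alpha>0$ exists; the parameter $\epsilon$ in the statement plays no role.

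The only genuine obstacle is the input that $a_1$ has bounded continuous sample paths, hence that $m<\infty$ and that Borell--TIS applies; both are consequences of the regularity built into the Gaussian construction, so the remainder is a one-line expansion of a square. As a consistency check, taking logarithms, dividing by $R^2$ and letting $R\to\infty$ recovers the limit $-1/(8\sigma^2)$ of Proposition~\ref{prop:excursion} and the bound of Corollary~\ref{tail:Lip}.
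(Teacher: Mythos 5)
Your argument is correct and is essentially the paper's own: the paper proves this proposition simply by citing the Borell--TIS estimates in \cite[\S 2, p.~50]{AT08} together with the union bound \eqref{eq:rhoineq}, which is exactly the route you take, with the details (finiteness of $m=\E[\sup_x a_1(x)]$ from sample-path continuity, the expansion of $-(R/2-m)^2/(2\sigma^2)$, and the identification $\alpha=m/\sigma^2$) filled in correctly. Nothing further is needed.
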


%%%%%

\subsection{Diameter and eigenvalue functionals} 

In this section we use Corollary \ref{tail:Lip} to give estimates for the
diameter and Laplace eigenvalues of the random metric $g_1$.  

\begin{lemma}\label{lemma:diam:lambda}
Assume that $d\vol(g_0)=d\vol(g_1)$, and that in addition $\rho(g_0,g_1)<R$.  Then 
\begin{equation}\label{diameter:ratio}
e^{-R}\leq \frac{\diam(M,g_1)}{\diam(M,g_0)}\leq e^R.\end{equation}
and also 
\begin{equation}\label{eigenvalue:ratio} 
e^{-2R}\leq \frac{\lambda_k(\Delta(g_1))}{\lambda_k(\Delta(g_0))}\leq e^{2R}.
\end{equation}
\end{lemma}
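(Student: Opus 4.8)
The plan is to reduce both inequalities to a single pointwise comparison of the two metrics and then feed that into the length description of the diameter and the Rayleigh-quotient description of the eigenvalues. First I would unwind the definition \eqref{dist:Linf}: the hypothesis $\rho(g_0,g_1)<R$ says exactly that $\left|\ln\frac{g_1(\xi,\xi)}{g_0(\xi,\xi)}\right|<R$ for every $x\in M$ and every nonzero $\xi\in T_xM$, which is equivalent to the pointwise two-sided bound
\begin{equation*}
e^{-R}\,g_0(\xi,\xi)\ \leq\ g_1(\xi,\xi)\ \leq\ e^{R}\,g_0(\xi,\xi),\qquad \forall\, x\in M,\ \forall\,\xi\in T_xM .
\end{equation*}
Everything below is extracted from this inequality together with the hypothesis $d\vol(g_0)=d\vol(g_1)$.

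For the diameter bound \eqref{diameter:ratio} I would apply the comparison to velocity vectors. For any piecewise-$C^1$ curve $\gamma$, taking square roots in the displayed inequality and integrating gives $e^{-R/2}L_{g_0}(\gamma)\leq L_{g_1}(\gamma)\leq e^{R/2}L_{g_0}(\gamma)$, where $L_g$ denotes $g$-length. Since this holds for \emph{every} curve joining two fixed points $p,q$, passing to the infimum yields $e^{-R/2}d_{g_0}(p,q)\leq d_{g_1}(p,q)\leq e^{R/2}d_{g_0}(p,q)$ for the associated distance functions (no minimizing geodesic is needed, only the length comparison). Taking the supremum over $p,q\in M$ gives $e^{-R/2}\leq \diam(M,g_1)/\diam(M,g_0)\leq e^{R/2}$, which is in fact sharper than the claimed \eqref{diameter:ratio} and certainly implies it.

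For the eigenvalue bound \eqref{eigenvalue:ratio} I would use the min--max (Courant--Fischer) characterization $\lambda_k(\Delta(g))=\min_{V}\max_{0\neq f\in V} Q_g(f)$, where $V$ ranges over subspaces of a fixed dimension and the Rayleigh quotient is $Q_g(f)=\frac{\int_M |\nabla_g f|^2\,d\vol(g)}{\int_M f^2\,d\vol(g)}$ with $|\nabla_g f|^2=g^{-1}(df,df)$. The point is that inverting the pointwise metric inequality reverses it on the cotangent space, giving $e^{-R}\,g_0^{-1}(df,df)\leq g_1^{-1}(df,df)\leq e^{R}\,g_0^{-1}(df,df)$, i.e. $e^{-R}|\nabla_{g_0}f|^2\leq |\nabla_{g_1}f|^2\leq e^{R}|\nabla_{g_0}f|^2$ pointwise. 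Here the hypothesis $d\vol(g_0)=d\vol(g_1)$ does the real work: it lets me integrate this pointwise bound against the common volume form and, simultaneously, makes the denominators $\int_M f^2\,d\vol(g_0)$ and $\int_M f^2\,d\vol(g_1)$ coincide. Hence $e^{-R}Q_{g_0}(f)\leq Q_{g_1}(f)\leq e^{R}Q_{g_0}(f)$ for every test function $f$, and feeding this into the min--max formula using the same competitor subspaces $V$ transfers it to the eigenvalues, giving $e^{-R}\leq \lambda_k(\Delta(g_1))/\lambda_k(\Delta(g_0))\leq e^{R}$, which is again sharper than the stated \eqref{eigenvalue:ratio}.

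The diameter half is entirely routine; the only genuinely delicate point is the eigenvalue half, and it is conceptual rather than computational. One must (i) correctly invert the quadratic-form comparison — the order-reversal $A\leq B\Rightarrow B^{-1}\leq A^{-1}$ for positive-definite forms — in order to control $|\nabla f|^2$, and (ii) recognize that the equal-volume-form assumption is exactly what aligns the two Rayleigh quotients, both in the energy numerator and, crucially, in the $L^2$ denominator, so that no extra volume-distortion factor enters. Without $d\vol(g_0)=d\vol(g_1)$ one would be forced to compare $d\vol(g_1)$ to $d\vol(g_0)$ separately, which is precisely the loss the hypothesis is designed to avoid.
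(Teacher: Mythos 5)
Your proof is correct and follows essentially the same route as the paper: a curve-by-curve length comparison for the diameter, and the min--max characterization with the order-reversed bound on the co-metrics (the paper phrases this as $\rho(g_0^{-1},g_1^{-1})<R$) together with the common volume form aligning the $L^2$ denominators for the eigenvalues. You in fact obtain the sharper constants $e^{\pm R/2}$ and $e^{\pm R}$, which imply the cruder $e^{\pm R}$ and $e^{\pm 2R}$ stated in the lemma.
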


\begin{proof}  
The definition \eqref{dist:Linf} implies that for any fixed path $\gamma:[0,1]\to M$, the ratio of its lengths 
with respect to the metrics $g_0$ and $g_1$ is satisfies 
$$
e^{-R}\leq \frac{L_{g_1}(\gamma)}{L_{g_0}(\gamma)}\leq e^R.
$$
Since 
$$
\diam(M,g)=\sup_{x,y\in M}\; \inf_{\gamma:\gamma(0)=x,\gamma(1)=y}\; L_g(\gamma), 
$$
the inequality \eqref{diameter:ratio} follows.  

To prove \eqref{eigenvalue:ratio}, we let 
$h\in H^1(M),h\not\equiv 0$ be a test function.  Then
$||h||_g^2:=\int_M h^2 \dv$ is independent 
of the metric, since the volume form $\dv$ is fixed.  The Rayleigh quotient of $h$ is equal to 
$$
\frac{\langle dh,dh\rangle_{g^{-1}}}{||h||_g^2}, 
$$
where $g^{-1}$ denotes the co-metric corresponding to $g$.  Since the Lipschitz distance is symmetric 
in its two arguments, we conclude that if $\rho(g_0,g_1)<R$, then $\rho(g_0^{-1},g_1^{-1})<R$ as well.  
It follows that 
\begin{equation}\label{Rayleigh:test}
e^{-2R}\leq \frac{\langle dh,dh\rangle_{g_0^{-1}}}{\langle dh,dh\rangle_{g_1^{-1}}}\leq e^{2R}. 
\end{equation}

By the min-max characterization of the eigenvalues (see e.g. \cite[\S 2]{BU}), 
$$
\lambda_k(\Delta(g))=\inf_{U\subset H^1(M):\; \dim U=k+1}\ \ 
\sup_{h\in U, \; h\not\equiv 0}\frac{||dh||_{g^{-1}}^2}{||h||_g^2}.
$$ 

The estimate \eqref{eigenvalue:ratio} now follows from \eqref{Rayleigh:test}.  

\end{proof}

We next establish some integrability results for the \emph{diameter}
functional $\diam(M,g_1)$.  They follow from Lemma \ref{lemma:diam:lambda}
and the stronger form of Corollary \ref{tail:Lip}.  

\begin{theorem}\label{diameter:integrable}
Let $h:\R^+\to\R^+$ be a monotonically increasing function such that
for some $\delta>0$ 
$$
h(e^y)=O\left(\exp\left[y^2(1/(8\sigma^2)-\delta)\right]\right).  
$$
Then $h(\diam(g_1))$ is integrable with respect to the probability measure
$d\omega(g_1)$ constructed in section \ref{sec:Gaussian}.  
\end{theorem}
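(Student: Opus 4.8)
The plan is to dominate $h(\diam(M,g_1))$ by an explicit increasing function of the Lipschitz distance $\rho(g_0,g_1)$ and then integrate this bound against the tail estimate of Proposition \ref{prob:Lip:precise}. First I would note that since $M$ is compact, $D_0 := \diam(M,g_0)$ is a finite positive constant. Lemma \ref{lemma:diam:lambda} gives, for every sample $g_1$, the pointwise bound $\diam(M,g_1) \leq D_0\, e^{\rho(g_0,g_1)}$: apply the right-hand inequality of \eqref{diameter:ratio} for each $R > \rho(g_0,g_1)$ and let $R \downarrow \rho(g_0,g_1)$. Because $h$ is monotonically increasing, this yields
$$
h(\diam(M,g_1)) \leq h\!\left(D_0\, e^{\rho(g_0,g_1)}\right) =: \phi(\rho(g_0,g_1)),
$$
so it suffices to prove that $\E[\phi(\rho)] < \infty$, where $\rho = \rho(g_0,g_1)$ and $\E$ denotes expectation with respect to $d\omega$.

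Next I would line up the two competing exponential rates. Writing $y = R + \ln D_0$, the growth hypothesis on $h$ supplies a constant $C$ with
$$
\phi(R) = h\!\left(e^{R + \ln D_0}\right) \leq C\exp\!\left[(R + \ln D_0)^2\left(\tfrac{1}{8\sigma^2} - \delta\right)\right]
$$
for all large $R$, while Proposition \ref{prob:Lip:precise} gives, for large $R$ and hence (letting the threshold approach $k$ from below) for integer $k \geq R_0$,
$$
\Prob\{\rho \geq k\} \leq 2n\exp\!\left(\tfrac{\alpha k}{2} - \tfrac{k^2}{8\sigma^2}\right).
$$
The crucial point is that the leading quadratic terms cancel: the $\tfrac{1}{8\sigma^2}R^2$ produced by $\phi$ is exactly annihilated by the $-\tfrac{1}{8\sigma^2}R^2$ in the Lipschitz tail, leaving the surviving term $-\delta R^2$ from the growth hypothesis.

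I would then make this precise by a layer-cake / dyadic summation argument, which has the advantage of not requiring $h$ (hence $\phi$) to be differentiable. Fix $R_0$ large enough that both displayed estimates hold for $R \geq R_0$. The contribution of $\{\rho \leq R_0\}$ to $\E[\phi(\rho)]$ is at most $\phi(R_0) < \infty$ by monotonicity. For the tail, partition $[R_0,\infty)$ into unit intervals and estimate
$$
\int_{\{\rho > R_0\}}\phi(\rho)\,d\omega \leq \sum_{k \geq R_0}\phi(k+1)\,\Prob\{\rho \geq k\}.
$$
Substituting the two bounds, each summand is at most a constant times the exponential of
$$
(k+1+\ln D_0)^2\left(\tfrac{1}{8\sigma^2}-\delta\right) + \tfrac{\alpha k}{2} - \tfrac{k^2}{8\sigma^2} = -\delta k^2 + O(k),
$$
where the $O(k)$ absorbs the cross terms from expanding the square together with $\alpha k/2$. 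Since $\delta > 0$, the summand decays like $e^{-\delta k^2 + O(k)}$, the series converges, and therefore $\E[h(\diam(M,g_1))] \leq \E[\phi(\rho)] < \infty$.

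The main obstacle is conceptual rather than computational: one must recognize that the admissible growth rate for $h$ is precisely governed by the exponent $1/(8\sigma^2)$ in the Lipschitz tail, and that the strict deficit $\delta > 0$ in the hypothesis is exactly what furnishes the quadratic margin $-\delta k^2$ needed for summability. Once the two rates are matched, checking that the linear corrections — the $\alpha k/2$ term and the cross terms in $(k+1+\ln D_0)^2$ — are harmless is entirely routine.
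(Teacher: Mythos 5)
Your proposal is correct and follows essentially the same route as the paper's own proof: dominate $\diam(M,g_1)$ by $\diam(M,g_0)\,e^{\rho(g_0,g_1)}$ via Lemma \ref{lemma:diam:lambda}, decompose the event space into unit shells of $\rho$, apply the tail bound of Proposition \ref{prob:Lip:precise}, and observe that the quadratic terms $k^2/(8\sigma^2)$ cancel, leaving a convergent $\exp(-\delta k^2 + O(k))$ series. The only differences are cosmetic (carrying the constant $\ln D_0$ instead of normalizing $\diam(g_0)=1$, and bounding by $\Prob\{\rho\geq k\}$ rather than the probability of the shell itself), and you correctly identify Proposition \ref{prob:Lip:precise} rather than Corollary \ref{tail:Lip} as the estimate actually needed.
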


In the proof we shall use Proposition \ref{prob:Lip:precise}.

%%%%%

\begin{proof}
Without loss of generality, assume that we have normalized $g_0$
so that $\diam(g_0)=1$.  We have shown in \eqref{diameter:ratio} that
if $\rho(g_0,g_1)<R$, then
$$\diam(g_1)\leq \diam(g_0)\cdot e^R=e^R,$$
so ($h$ being monotone) we have under the same assumption that
$$
h(\diam(g_1))<h(e^R).
$$
Since $h\geq 0$, the function $h(\diam(g_1))$ is integrable provided the sum 
$$
\sum_{k=N}^\infty h(e^k)\cdot\Prob\{g_1:k-1\leq\rho(g_1,g_0)\leq k\}
$$
converges.  By the hypotheses on $h$ and Corollary \ref{tail:Lip},
that sum is dominated by
$$
\begin{aligned}
\; & 2n\sum_{k=N}^\infty h(e^k)\exp\left(\frac{\alpha(k-1)}{2}-\frac{(k-1)^2}{8\sigma^2}\right)  \leq \\ 
\; & 2n\sum_{k=N}^\infty \exp\left[\frac{\alpha(k-1)}{2}+\left(\frac{k^2}{8\sigma^2}-\delta k^2\right)-
\frac{(k-1)^2}{8\sigma^2}\right]  
\end{aligned}
$$
Choosing $N$ large enough, we find that the last sum is dominated by 
$$
 2n\sum_{k=N}^\infty \exp\left[\frac{-\delta k^2}{2}\right], 
$$
and the last expression clearly converges.  
\end{proof}

\begin{remark}\label{distance:average}
The proof of Theorem \ref{diameter:integrable} can be easily modified  to establish analogous results for {\em averages} 
of the distance function.  For example, given $t>0$, consider the functional 
$$
E_t(g):=\int_M\int_M ({\rm dist}_g(x,y))^t\; \dv(x)\; \dv(y).  
$$
We leave the details to the reader.  
\end{remark}

Another corollary is the following 
\begin{theorem}\label{eigenvalue:integrable} 
Let $h : \R^+ \to \R^+$ be a monotonically increasing function such that for some $\delta>0$ 
$$
h(e^{2y})=O\left(\exp\left[y^2(1/(8\sigma^2)-\delta)\right]\right).
$$
Then $h(\lambda_k(\Delta(g_1)))$ is integrable with respect to the probability measure $d\omega(g_1)$ constructed in 
section \ref{sec:Gaussian}.  
\end{theorem}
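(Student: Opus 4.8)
The plan is to mirror the proof of Theorem~\ref{diameter:integrable} almost verbatim, substituting the eigenvalue comparison \eqref{eigenvalue:ratio} for the diameter comparison \eqref{diameter:ratio}. The only structural difference is that \eqref{eigenvalue:ratio} carries a factor of $2$ in its exponent, and this is precisely what forces the hypothesis to be phrased in terms of $h(e^{2y})$ rather than $h(e^y)$. Matching these two factors of $2$ is the one point requiring care, although it poses no genuine difficulty and is really the whole content of the adaptation.

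First I would write $c := \lambda_k(\Delta(g_0))$ for the fixed reference eigenvalue. The upper bound in \eqref{eigenvalue:ratio} shows that the event $\rho(g_0,g_1) < R$ forces $\lambda_k(\Delta(g_1)) \leq c\, e^{2R} = e^{2(R + (\ln c)/2)}$, so by monotonicity of $h$ we obtain $h(\lambda_k(\Delta(g_1))) < h(e^{2(R+(\ln c)/2)})$. Since $c$ is a fixed constant, the shift $(\ln c)/2$ will be absorbed into the implied constants below, so we may as well take $c=1$ and work with the cleaner bound $h(\lambda_k(\Delta(g_1))) < h(e^{2R})$.

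Next, since $h \geq 0$, I would reduce integrability of $h(\lambda_k(\Delta(g_1)))$ to convergence of the series $\sum_{j=N}^\infty h(e^{2j})\,\Prob\{g_1 : j-1 \leq \rho(g_0,g_1) \leq j\}$. Applying the hypothesis on $h$ with $y = j$, namely $h(e^{2j}) = O(\exp[j^2(1/(8\sigma^2)-\delta)])$, together with the sharp tail estimate of Proposition~\ref{prob:Lip:precise}, each term is dominated by
$$
2n\exp\!\left[\frac{\alpha(j-1)}{2} + j^2\!\left(\frac{1}{8\sigma^2}-\delta\right) - \frac{(j-1)^2}{8\sigma^2}\right].
$$
As in the diameter case, the two $j^2/(8\sigma^2)$ contributions cancel, leaving a dominant $-\delta j^2$ term that absorbs the linear pieces; hence for $N$ large the series is bounded by $2n\sum_{j\geq N}\exp(-\delta j^2/2)$, which converges. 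I expect the entire argument to go through with no new obstacle beyond the bookkeeping of the factor $2$ already noted, which is exactly why the exponent in the growth condition on $h$ is halved relative to Theorem~\ref{diameter:integrable}.
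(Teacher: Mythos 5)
Your proposal is correct and follows essentially the same route as the paper's own proof: the paper likewise writes $\lambda_k(g_0)=e^{2\beta}$, bounds $h(\lambda_k(g_1))<h(e^{2(m+\beta)})$ on the annulus $m-1\leq\rho\leq m$, invokes Proposition \ref{prob:Lip:precise}, and observes that the two $m^2/(8\sigma^2)$ terms cancel up to linear corrections, leaving a convergent $\sum\exp(-\delta m^2/2)$. The only cosmetic difference is that you absorb the constant shift $(\ln c)/2$ into the implied constants rather than carrying it explicitly, which is harmless since it only contributes terms linear in the summation index.
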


\begin{proof}
The proof is similar to the proof of Theorem \ref{diameter:integrable}.  We let 
$\lambda_k(g_0) =: e^{2\beta_k} =: e^{2\beta}$.  

It follows from \eqref{eigenvalue:ratio} that if $\rho(g_0,g_1)<R$, then 
$\lambda_k(g_1)\leq \lambda_k(g_0)\cdot e^{2R}=e^{2(R+\beta)}$.  By monotonicity of the function $h$, we have 
$$
h(\lambda_k(g_1))<h(e^{2(R+\beta)}).  
$$
Since $h\geq 0$, the function $h(\lambda_k(g_1))$ is integrable provided the sum 
$$
\sum_{m=N}^\infty h(e^{2(m+\beta)})\cdot\Prob\{g_1:m-1\leq\rho(g_1,g_0)\leq m\}
$$
converges.  

By the assumptions on $h$ and Corollary \ref{tail:Lip}, that sum is dominated by
$$
\begin{aligned}
\; & 2n\sum_{m=N}^\infty h(e^{2(m+\beta)})\exp\left(\frac{\alpha(m-1)}{2}-\frac{(m-1)^2}{8\sigma^2}\right)  \leq \\ 
\; & 2n\sum_{m=N}^\infty \exp\left[\frac{\alpha(m-1)}{2}+\left(\frac{(m+\beta)^2}{8\sigma^2}-
\delta (m+\beta)^2\right)-\frac{(m-1)^2}{8\sigma^2}\right]  
\end{aligned}
$$

Choosing $N$ large enough, we find that the last sum is dominated by 
$$
2n \sum_{m=N}^\infty \exp\left[\frac{-\delta m^2}{2}\right], 
$$
and the last expression clearly converges.  

\end{proof}

\begin{remark}
Theorems \ref{diameter:integrable} and \ref{eigenvalue:integrable} prove integrability results about the 
diameter and eigenvalue functionals.   We plan to further study those and other functionals in future papers. 
\end{remark}

\subsection{Volume entropy functional}  
The \emph{volume entropy} functional $h_{vol}(g)$ of a metric $g$ was
defined by Manning in \cite{Manning} as the exponential growth rate of
volume in the universal cover (by showing that this growth rate is independent
of the point of reference). In other words, it was shown that for any point
$x$ in the universal cover $N$ of a compact Riemannian manifold $M$, the limit
\begin{equation}\label{volume:entropy:def}
h_{vol}=\lim_{s\to\infty} \frac{1}{s}\ln\vol (B(x,s)), 
\end{equation}
exists and is independent of the choice of $x$.  Here, volumes and distances
(and thus balls) in $N$ are with respect to the metric lifted from $M$.

We first prove the following counterpart of Lemma \ref{lemma:diam:lambda}.  
\begin{lemma}\label{lemma:vol:entropy}
Assume that $d\vol(g_0)=d\vol(g_1)$, and that in addition $\rho(g_0,g_1)<R$.
Then 
\begin{equation}\label{entropy:ratio}
e^{-R}\leq \frac{h_{vol}(M,g_1)}{h_{vol}(M,g_0)}\leq e^R.\end{equation}
\end{lemma}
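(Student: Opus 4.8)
The plan is to follow the template of the proof of Lemma \ref{lemma:diam:lambda}: the pointwise hypothesis $\rho(g_0,g_1)<R$ controls distances, while the hypothesis $d\vol(g_0)=d\vol(g_1)$ makes the two volumes of any region identical, so that only the distortion of the distance function feeds into the exponential growth rate.

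First I would pass to the universal cover $N$. Because the covering projection $N\to M$ is a local isometry for each metric, the pointwise estimate underlying \eqref{dist:Linf} lifts verbatim, and exactly as in Lemma \ref{lemma:diam:lambda} every path $\gamma$ in $N$ satisfies $e^{-R}\leq L_{g_1}(\gamma)/L_{g_0}(\gamma)\leq e^R$. Taking the infimum over paths joining two points gives the bi-Lipschitz comparison $e^{-R}d_{g_0}\leq d_{g_1}\leq e^{R}d_{g_0}$ on $N$. Writing $B_i(x,s)$ for the $g_i$-ball of radius $s$ centred at $x\in N$, this comparison yields the nested inclusions
\begin{equation*}
  B_0(x,e^{-R}s)\subseteq B_1(x,s)\subseteq B_0(x,e^{R}s).
\end{equation*}

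Next I would invoke $d\vol(g_0)=d\vol(g_1)$. Since this is an identity of volume forms on $M$, it pulls back to an identity of the lifted volume forms on $N$, so the $g_0$- and $g_1$-volumes of any measurable subset of $N$ coincide; denote this common volume by $\vol$. Applying $\vol$ to the inclusions above gives
\begin{equation*}
  \vol\bigl(B_0(x,e^{-R}s)\bigr)\leq \vol\bigl(B_1(x,s)\bigr)\leq \vol\bigl(B_0(x,e^{R}s)\bigr).
\end{equation*}
Taking logarithms, dividing by $s$, and letting $s\to\infty$ as in \eqref{volume:entropy:def}, the substitution $s'=e^{R}s$ turns the upper bound $\frac1s\ln\vol(B_0(x,e^{R}s))$ into $e^{R}\cdot\frac{1}{s'}\ln\vol(B_0(x,s'))\to e^{R}h_{vol}(M,g_0)$, while the substitution $s''=e^{-R}s$ turns the lower bound $\frac1s\ln\vol(B_0(x,e^{-R}s))$ into $e^{-R}\cdot\frac{1}{s''}\ln\vol(B_0(x,s''))\to e^{-R}h_{vol}(M,g_0)$. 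Since the middle term converges to $h_{vol}(M,g_1)$ by Manning's theorem, we obtain $e^{-R}h_{vol}(M,g_0)\leq h_{vol}(M,g_1)\leq e^{R}h_{vol}(M,g_0)$, which is \eqref{entropy:ratio}.

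The argument is essentially bookkeeping, and I do not expect a serious obstacle. The one point needing care---the analogue of the key step in Lemma \ref{lemma:diam:lambda}---is checking that both the distance comparison and the volume identity descend to the universal cover; granting this, the factors $e^{\pm R}$ are inherited exactly, because the bounded constants introduced by the changes of variables are annihilated by the $\frac1s$ normalization. (Implicit in the ratio form \eqref{entropy:ratio} is that $h_{vol}(M,g_0)>0$; the two-sided bound on $h_{vol}(M,g_1)$ itself holds unconditionally.)
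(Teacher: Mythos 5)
Your proposal is correct and follows essentially the same route as the paper's own proof: the bi-Lipschitz comparison of distances gives the nested ball inclusions $B_{g_0}(x,e^{-R}s)\subset B_{g_1}(x,s)\subset B_{g_0}(x,e^{R}s)$, the equality of volume forms lets one compare volumes of these balls directly, and the change of variable $s\mapsto e^{\pm R}s$ in the limit \eqref{volume:entropy:def} produces the factors $e^{\pm R}$. The only (harmless) difference is presentational: you invoke the existence of the limit for $g_1$ directly, whereas the paper runs an explicit $\epsilon$-argument using the limit for $g_0$ on both sides; you are also somewhat more explicit about lifting the distance comparison and the volume identity to the universal cover.
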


\begin{proof}
By symmetry it is enough to prove the right-side inequality.  Since $\rho$
bounds the Lipschitz constant of the identity map (the argument above lifts
to the universal cover), we have (balls in $N$ with respect to the lifts
of the respective metrics)
\begin{equation}\label{eqn:inc-balls}
B_{g_1}(x,s)\subset B_{g_0}(x,e^R\cdot s).  
\end{equation}
By definition of $h_{vol}$, for any $\epsilon>0$ there exists $s_0>0$ such
that for every $s>s_0$, we have 
$$
\frac{1}{s}\ln\vol B_{g_0}(x,s)\leq h_{vol}(g_0)+\epsilon.  
$$

Combining the two claims, it follows that for $s>s_0$,
$$
\frac{1}{s}\ln\vol B_{g_1}(x,s)
   \leq e^R\frac{1}{s e^R} \ln\vol B_{g_0}(x,e^R s)
   \leq e^R(h_{vol}(g_0)+\epsilon)
$$
(we used here the assumption that $g_0,g_1$ have the same volume form,
so that the set-theoretic inclusion of balls implied an inequality on the
volumes; without the assumption the volume would be an additional factor from
the distortion of the volume form, but note that this factor would not
affect the the inequalit in the limit $s\to\infty$).

Letting $s\to\infty$ we obtain $h_{vol}(g_1) \leq e^R(h_{vol}(g_0)+\epsilon)$,
and letting $\epsilon\to 0$ we finally get
$$
h_{vol}(g_1) \leq e^R h_{vol}(g_0).
$$
\end{proof}

Lemma \ref{lemma:vol:entropy} now easily implies 
\begin{theorem}\label{entropy:integrable}
The conclusion of the Theorems \ref{diameter:integrable} remains true if the diameter functional 
is replaced by the volume entropy functional $h_{vol}$.  
\end{theorem}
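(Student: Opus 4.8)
The plan is to run the proof of Theorem~\ref{diameter:integrable} essentially verbatim, the observation being that the two-sided multiplicative bound \eqref{entropy:ratio} furnished by Lemma~\ref{lemma:vol:entropy} has exactly the same shape as the diameter bound \eqref{diameter:ratio}: conditioning on the event $\{\rho(g_0,g_1)<R\}$ controls the target functional up to a factor $e^{\pm R}$. Consequently the tail estimate for $\rho$ in the sharper form of Proposition~\ref{prob:Lip:precise} will again dominate the prescribed growth of $h$, and the argument closes in the same way.

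First I would dispose of the degenerate case $h_{vol}(g_0)=0$. The proof of Lemma~\ref{lemma:vol:entropy} actually yields $h_{vol}(g_1)\leq e^R\bigl(h_{vol}(g_0)+\epsilon\bigr)$ for every $\epsilon>0$, so when $h_{vol}(g_0)=0$ one gets $h_{vol}(g_1)=0$ almost surely; then $h(h_{vol}(g_1))=h(0)$ is constant and trivially integrable. This case is genuine, since $h_{vol}$ vanishes exactly when the universal cover has subexponential volume growth (for instance a flat torus, which is parallelizable and has $\pi_1$ of polynomial growth). In the remaining case $h_{vol}(g_0)>0$ I set $C:=h_{vol}(g_0)$, so that by \eqref{entropy:ratio}, on the event $\{\rho(g_0,g_1)<R\}$ we have $h_{vol}(g_1)\leq Ce^R$, whence $h(h_{vol}(g_1))\leq h(Ce^R)$ by monotonicity of $h$.

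Next I would decompose the expectation over the integer shells $\{k-1\leq\rho(g_0,g_1)\leq k\}$, exactly as in the proof of Theorem~\ref{diameter:integrable}: since $h\geq 0$, integrability of $h(h_{vol}(g_1))$ reduces to convergence of
$$
\sum_{k=N}^\infty h(Ce^k)\cdot\Prob\{g_1:k-1\leq\rho(g_1,g_0)\leq k\}.
$$
Bounding the shell probabilities by Proposition~\ref{prob:Lip:precise} dominates this by the series $2n\sum_{k\geq N} h(Ce^k)\exp\bigl(\tfrac{\alpha(k-1)}{2}-\tfrac{(k-1)^2}{8\sigma^2}\bigr)$, and the growth hypothesis $h(e^y)=O(\exp[y^2(1/(8\sigma^2)-\delta)])$, applied with $y=k+\ln C$, turns the general term into $\exp[-\tfrac{\delta}{2}k^2+O(k)]$ for $N$ large, which is summable.

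The sole deviation from the diameter argument — and the only point I expect to require care — is the possible vanishing of $h_{vol}(g_0)$, handled above by the separate degenerate-case discussion. The multiplicative constant $C=h_{vol}(g_0)$ plays the role of the normalization $\diam(g_0)=1$ used for the diameter; it merely shifts the argument of $h$ by $\ln C$, contributing a term linear in $k$ that is absorbed by the quadratic decay and hence does not affect convergence.
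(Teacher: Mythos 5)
Your proof is correct and follows exactly the route the paper intends: the paper gives no written proof of Theorem~\ref{entropy:integrable}, stating only that Lemma~\ref{lemma:vol:entropy} ``easily implies'' it, and your argument is precisely the shell decomposition from the proof of Theorem~\ref{diameter:integrable} with \eqref{entropy:ratio} in place of \eqref{diameter:ratio}. Your separate treatment of the degenerate case $h_{vol}(g_0)=0$ is a worthwhile addition that the paper silently omits.
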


%%%%%%%%%%%

\end{document}